\lstdefinelanguage{Sage}[]{Python}
{morekeywords={False,sage,True},sensitive=true}
\definecolor{dblackcolor}{rgb}{0.0,0.0,0.0}
\definecolor{dbluecolor}{rgb}{0.01,0.02,0.7}
\definecolor{dgreencolor}{rgb}{0.2,0.4,0.0}
\definecolor{dgraycolor}{rgb}{0.30,0.3,0.30}
\newcommand{\Z}{\mathbf{Z}}
\newcommand{\F}{\mathbf{F}}
\newcommand{\Q}{\mathbf{Q}}
\newcommand{\C}{\mathcal{C}}
\newcommand{\D}{\textbf{D}}
\newcommand{\p}{\bar{\rho}}
\newcommand{\rps}{R^{\mathrm{ps}}}
\newcommand{\rtr}{R^{\mathrm{coeff}}}
\newcommand{\m}{\mathfrak{m}}
\def\Runiv{R^{\mathrm{univ}}}
\def\univ{\mathrm{univ}}
\def\coeff{\mathrm{coeff}}
\def\D{\mathbf{D}}
\newcommand{\GL}{{\rm GL}}
\newtheorem{thm}{Theorem}[section]
\newtheorem{lemma}[thm]{Lemma}
\newtheorem{prop}[thm]{Proposition}
\newtheorem{ithm}{Theorem}
\theoremstyle{definition}
\newtheorem{eg}{Example}[section]
\newtheorem{defi}[thm]{Definition}
\newtheorem{rmk}[thm]{Remark}
\title{Pseudorepresentations Not Arising from genuine representations}
\author{Jinyue Luo}
\address{Department of Mathematics, The University of Chicago, 
	Chicago, IL 60637}
\email{jinyue@math.uchicago.edu}
\date{}
\begin{document}
\maketitle

\begin{abstract}
We show that a pseudorepresentation $\D\colon A[G] \rightarrow A$ of a (finite) group $G$ need not arise from a genuine  representation, even if one is allowed
to extend the ring~$A$.  This shows that a theorem of the "embedding problem" for residually multiplicity free pseudorepresentations ~\cite[Proposition~1.3.13]{8} can not be extended to the general setting. 
\end{abstract}

\section{Introduction}
In~\cite{6}, Mazur initiated the study of deformations of residual Galois representations:
$$\p\colon G \rightarrow \GL_2(k)$$
for a finite field~$k$. Mazur showed that when~$\p$ was absolutely irreducible, there exists a universal deformation ring $\Runiv$.
In applications to modularity, one often identifies a certain restricted deformation ring~$R$ of~$\p$ with a (localized) Hecke algebra~$\mathbf{T}$, where the Hecke operators~$T_{v}$ for
general primes~$v$ are identified with the traces of Frobenius of~$\p: G_{\Q} \rightarrow \GL_2(R)$.

If~$\p$ is reducible, however, then the universal deformation ring~$\Runiv$ does not always exist. One can sometimes get around this issue by instead considering the universal 
\emph{framed} deformation ring~$R^{\square,\mathrm{univ}}$. However, the relationship between~$R^{\square,\mathrm{univ}}$ (and various restricted deformation rings~$R^{\square}$) and Hecke rings~$\mathbf{T}$ is now more subtle.

One approach is to consider the subring of~$R^{\square,\univ}$ generated by the traces of images of~$g \in G$ and to compare that to~$\mathbf{T}$. A second approach is to replace representations of~$G$
by \emph{pseudorepresentations}.
Pseudorepresentations of a group $G$ over a commutative ring $A$ are $A$-valued functions on $G$ that behave like characters of finite-dimensional representations. In this context, pseudo-representations were first considered by Wiles in~\cite{2} where they were called pseudo-characters. 
In Wiles' setting, the dimension $d$ is $2$. Then Taylor generalized to any dimension $d$ with the assumption $d! \in A^*$ ~\cite{3}. In this paper, we will mainly consider the pseudorepresentations of Chenevier ~\cite{1} called "determinants" which in particular allow us to deal with the case where $d!$ is not invertible in $A^*$ (see Sec \ref{pseudo} for precise definitions). From Proposition E of ~\cite{1}, the deforming functor is prorepresentable by the universal pseudodeformation ring $\rps$.

Given a pseudorepresentation $\D$ of~$G$ valued in a ring~$A$, it is natural to ask whether~$\D$ arises from a genuine representation. This is the "embedding problem".
One can ask this question in two forms:
\begin{enumerate}
\item Does there exist a representation~$\rho\colon G \rightarrow \GL(A)$ whose associated pseudorepresentation is~$\D$?
\item Does there exist a ring~$B$, an injective morphism~$A \rightarrow B$, and a representation~$\rho\colon G \rightarrow \GL(B)$ whose associated pseudorepresentation in~$B$ is valued in~$A$ and coincides with~$\D$?
\end{enumerate}
The following is the main theorem of the paper. 
\begin{ithm}[Theorem \ref{extraspecial}]
There exists a finite group~$G$ and a pseudorepresentation $\D$ of~$G$ serving as a counter-example to the questions above. 
\end{ithm}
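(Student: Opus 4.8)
The plan is to exploit the one regime that the residually multiplicity free theory of \cite{8} cannot reach, namely a residual pseudorepresentation that is as far from multiplicity free as possible: a totally trivial (unipotent) one. The device for forcing this is to work in characteristic $p$ with a finite $p$-group. Since the group algebra $k[G]$ of a $p$-group over a field $k$ of characteristic $p$ is local, the only semisimple irreducible representation is the trivial one; hence for a local $k$-algebra $A$ with residue field $k$, any $d$-dimensional determinant of $G$ over $A$ reduces modulo the maximal ideal $\m$ to the determinant of the trivial $d$-dimensional representation, in which the trivial character occurs with multiplicity $d$. This is precisely the opposite of the hypothesis in \cite[Proposition~1.3.13]{8}, and it is what opens the door to an obstruction.

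First I would take $G$ to be an extraspecial $p$-group, say the Heisenberg group $\langle x,y,z \mid x^p=y^p=z^p=1,\ [x,y]=z,\ z\ \text{central}\rangle$ for odd $p$, and fix $k=\overline{\F}_p$. The structural input I intend to use is the commutator relation $xy=zyx$ together with the fact that $z$ generates both the center and the derived subgroup. Next I would write down an explicit Artinian local $k$-algebra $A$ (built from dual-number-type relations) and an explicit candidate determinant $\D\colon A[G]\to A$ of a suitable dimension $d$, presented through the traces $\D$ assigns to group elements, equivalently through a Cayley--Hamilton quotient of $A[G]$. I would then verify that $\D$ genuinely satisfies Chenevier's axioms, i.e.\ that it is a multiplicative homogeneous degree-$d$ polynomial law; the cleanest route is to produce a Cayley--Hamilton $A$-algebra carrying $\D$ rather than to check the polynomial identities directly.

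The heart of the argument is to show that $\D$ arises from no representation $\rho\colon G\to\GL_d(B)$ over any injective extension $A\hookrightarrow B$. The key point, and the reason the obstruction survives base change (unlike a Schur-index or field-of-definition obstruction, which Chenevier's theorem dissolves over field extensions), is that the condition $\operatorname{tr}\rho(g)\in A$ for all $g$ forces the off-diagonal products of $\rho$, in the generalized matrix algebra attached to $\rho$, to lie in $A$ and to obey rank-one, determinantal identities. Concretely, if such a $\rho$ existed then certain products $m(g,h)$ of matrix entries would be elements of $A$ satisfying a relation of the shape $m(g,h)\,m(g',h')=m(g,h')\,m(g',h)$, since in the commutative ring $B$ both sides expand to the same product of individual entries. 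I would arrange, using the extraspecial commutator relation to pin down these products, that the relation prescribed by $\D$ fails in $A$. As every term already lies in $A$ and $A\hookrightarrow B$ is injective, the same relation then fails in $B$, contradicting the existence of $\rho$. Taking $B=A$ answers question~(1) and the general $B$ answers question~(2).

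I expect the main obstacle to be exactly this base-change stability. Over a field, every determinant comes from a semisimple representation after extending scalars, so the entire difficulty is to force the obstruction to be an \emph{internal} relation among elements of $A$ rather than anything that can be resolved by enlarging $B$. Achieving this requires choosing $A$ non-reduced and choosing $d$ and $\D$ so that the repeated trivial factor genuinely produces an off-diagonal pairing of rank $>1$; pinning down the smallest such $(G,A,d,\D)$, and checking via the explicit Heisenberg computation that the forced identity is really violated, is where the technical weight of the proof will lie.
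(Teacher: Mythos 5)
Your high-level framing matches the paper's: work in characteristic $p$ with a finite $p$-group so that the residual pseudorepresentation is the trivial character with multiplicity $d$, placing it outside the reach of the multiplicity-free theory of \cite{8}, and look for an obstruction internal to $A$ that therefore survives any injective extension $A\hookrightarrow B$. But there are two genuine gaps. First, the mechanism you propose for the contradiction does not work as stated: for a hypothetical $\rho\colon G\to\GL_d(B)$, the hypothesis is only that the coefficients of the characteristic polynomials $P(\D,g)$ lie in $A$; this does not force the individual off-diagonal products $m(g,h)=\rho(g)_{12}\rho(h)_{21}$ to lie in $A$. The identities expressing such products in terms of traces are exactly the generalized-matrix-algebra structure of \cite{8}, and they are available precisely when the residual pseudorepresentation is multiplicity free --- the case you have (correctly) arranged to exclude. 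In the residually trivial case there is no canonical system of idempotents, the $m(g,h)$ are just elements of $B$, and the ``rank-one determinantal identity failing in $A$'' argument has no foothold. Second, the counterexample is never actually produced: you do not specify $d$, the Artinian ring $A$, or the determinant $\D$, and you concede that verifying the violated identity is where the technical weight lies. That verification is the entire content of the theorem.

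The paper's route avoids your first gap by arguing with universal objects rather than with a hypothetical matrix realization. It sets up a canonical surjection $\rps\twoheadrightarrow\rtr$ from the universal pseudodeformation ring onto the subring of the universal framed deformation ring generated by characteristic-polynomial coefficients, and shows that a positive answer to the embedding problem (even in its weak form, over some $B\supseteq A$) would force this surjection to be an isomorphism. It then takes $G=2_+^{1+4}$ of order $32$ with $p=d=2$ and exhibits the explicit witness $T(a^2)-2$, where $a^2$ generates $[G,G]$: a Magma computation shows this element is nonzero in $\rps/(\m^3,2)$, while a Sage computation shows $\mathrm{tr}(\rho^{\univ}(a^2))-2$ already vanishes in $\Z[X_i]/I$ and hence in $\rtr$. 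Note also that the paper's companion results (abelian $2$-groups and dihedral $2$-groups give $\rps\cong\rtr$) show the choice of group is delicate; your proposed Heisenberg group of order $p^3$ at odd $p$ with $d=2$ would itself require a computation of this kind before you could claim it works.
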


The second version of this question is the weaker version and is the one we consider here. (Any counter-example to the second question immediately gives a counter-example to the first question.) 

Given a genuine representation $\rho \colon G \to \GL_d(A)$, extend it to $\rho \colon G \to M_d(A)$ by linear extension and define $\D_{\rho}\coloneqq \det\circ\rho$ which is a degree $d$ determinant from $A[G]$ to $A$. It is a $d$-dimensional pseudorepresentation by definitions in \ref{pseudo}.

\begin{defi}
    If $B$ is an $A$-algebra and $m \in B[G]$, we could extend the action of determinant to $B$ and define the the characteristic polynomial $P(\D,m)(X) \in B[X]$ of $m$ by the formula
    $$P(\D,m)(X)\colonequals \D(X-m) \eqqcolon \sum_{i=0}^d (-1)^i\Lambda_i(m)X^{d-i}.$$
\end{defi}

\begin{lemma}{~\cite[Corollary~7.11]{1}}
    Let $\D$ be an $A$-valued determinant on $G$ of dimension $d$ and $R \subset A$ the subring generated by the coefficients $\Lambda_i(g)$ of $P(\D,g)(X)$ for all $g\in G$. Then $\D$ factors through a (unique) $R$-valued determinant on $G$ of dimension $d$.
\end{lemma}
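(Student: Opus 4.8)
The plan is to split the statement into uniqueness and existence, and to route existence through Roby's equivalence between multiplicative polynomial laws and divided-power algebras. By that equivalence, the dimension-$d$ determinant $\D$ corresponds to a homomorphism of $A$-algebras $\pi\colon \Gamma^d_A(A[G]) \to A$, characterized by $\D(m) = \pi(\gamma^d(m))$, where $\gamma^d$ is the universal degree-$d$ multiplicative polynomial law. Because divided powers commute with base change, $\Gamma^d_A(A[G]) \cong A \otimes_\Z \Gamma^d_\Z(\Z[G])$, so I may regard $\pi$ as restricting to a ring homomorphism $\pi_0\colon \Gamma^d_\Z(\Z[G]) \to A$. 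Setting $R' \colonequals \pi_0(\Gamma^d_\Z(\Z[G]))$, a subring of $A$, the whole proof reduces to identifying $R'$ with $R$.

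Existence is then formal once $R' \subseteq R$ is known. Indeed $\pi_0$ factors as $\Gamma^d_\Z(\Z[G]) \twoheadrightarrow R' \hookrightarrow A$, and extending scalars along $\Z \to R'$ yields an $R'$-algebra homomorphism $\Gamma^d_{R'}(R'[G]) \to R'$, i.e. (again by Roby) an $R'$-valued determinant $\D'$ on $G$ of dimension $d$ whose base change to $A$ is $\D$. For uniqueness I would argue directly: any two $R$-valued determinants lifting $\D$ agree after the injection $R \hookrightarrow A$, and a polynomial law $R[G] \to R$ is determined by its value on the generic element $\sum_g x_g[g] \in R[x_g : g \in G][G]$, which is a polynomial in $R[x_g] \subseteq A[x_g]$; injectivity of $R \to A$ then forces the two laws to coincide.

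It remains to prove $R' = R$. The inclusion $R \subseteq R'$ is the easy direction: expanding $\gamma^d(X\cdot[e] - [g])$ in the divided-power algebra over $\Z[X]$ exhibits the coefficients of $P(\D,g)(X)$, namely the $\Lambda_i(g)$, as $\pi_0$-images of canonical elements $e_i([g]) \in \Gamma^d_\Z(\Z[G])$ (with $e_0 = 1$ and $e_d([g]) = \gamma^d([g])$), whence $\Lambda_i(g) \in R'$ for all $g$ and $i$. The reverse inclusion $R' \subseteq R$ is the heart of the matter and the step I expect to be the main obstacle: I must show that $\Gamma^d_\Z(\Z[G])$ is generated \emph{as a $\Z$-algebra} by the elements $e_i([g])$, $g \in G$, $0 \le i \le d$, for then $R' = \pi_0(\text{ring generated by the } e_i([g])) \subseteq R$.

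The subtlety is that the ring structure on $\Gamma^d_\Z(\Z[G])$ carrying the multiplicativity of $\D$ is the one induced by the group multiplication, \emph{not} the divided-power multiplication, so I cannot simply invoke the freeness of the divided-power algebra of the free module $\Z[G]$. Instead I would combine three ingredients: the multiplicativity $\gamma^d([g][h]) = \gamma^d([g])\,\gamma^d([h])$, which handles products of group elements; the binomial-type expansion $\gamma^d([g] + [h]) = \sum_{i+j=d}\gamma^i([g])\,\gamma^j([h])$, which produces the polarized mixed terms; and the fact that a $\Z$-spanning set of $\Gamma^d_\Z(\Z[G])$ is given by the monomials $\gamma^{d_1}([g_1])\cdots\gamma^{d_k}([g_k])$ with $\sum d_i = d$. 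Reconstructing these monomials from the $e_i([g])$ amounts, via polarization, to an integral symmetric-function identity: the $e_i$ are elementary-symmetric in flavour, and it is precisely because one uses these (rather than the power sums underlying the traces) that the generation holds over $\Z$ without inverting $d!$ — which is exactly what makes the descent land in $R$ itself rather than in $R[1/d!]$.
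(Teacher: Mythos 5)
The paper gives no proof of this lemma at all --- it is quoted verbatim from \cite[Corollary~7.11]{1} --- so your attempt is really being measured against Chenevier's own argument, and your architecture coincides with his: Roby's equivalence, the base-change isomorphism $\Gamma^d_A(A[G])\cong \Gamma^d_{\Z}(\Z[G])\otimes_{\Z}A$, descent of the classifying homomorphism $\pi_0$ to its image subring $R'$, and uniqueness via evaluation at the generic element $\sum_g x_g[g]$. The existence and uniqueness formalities, and the easy inclusion $R\subseteq R'$ obtained by expanding $\gamma^d(X[e]-[g])$, are all correct as you present them.

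The gap sits exactly where you predicted: the inclusion $R'\subseteq R$, i.e.\ the claim that $\Gamma^d_{\Z}(\Z[G])^{\mathrm{ab}}$ is generated, as a ring under the multiplication induced by that of $\Z[G]$, by the elements $e_i([g])=\gamma^{d-i}([e])\gamma^i([g])$. Your three ingredients are the right raw material, but ``reconstructing these monomials from the $e_i([g])$ amounts, via polarization, to an integral symmetric-function identity'' is an assertion, not a proof. The identity required is Amitsur's formula, which expresses $\Lambda_d(x_1+\cdots+x_n)$ as an integral polynomial in the $\Lambda_j(w)$ for \emph{words} $w$ in the $x_i$; it is a genuinely nontrivial combinatorial statement (a signed sum over multisets of words), not a repackaging of elementary-versus-power-sum symmetric functions, and it is precisely what occupies the lemmas immediately preceding Corollary~7.11 in \cite{1}. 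Granting it, the argument closes as you intend: applying the universal determinant to $\sum_g t_g[g]$ over $\Z[t_g]$ and comparing coefficients of $t^{\underline d}$ identifies each basis monomial $\gamma^{d_1}([g_1])\cdots\gamma^{d_k}([g_k])$ with an integral polynomial in the $\Lambda_j(g)$, because a word in elements of $G$ is again an element of $G$. Already for $d=2$ the needed relation is $\gamma^1([g])\gamma^1([h])=\Lambda_1(g)\Lambda_1(h)-\Lambda_1(gh)$, the universal form of the cross-term identity $T(g)T(h)-T(gh)$ appearing in Lemma~\ref{TraceEq} and in the displayed formula for $D(gX+hY)$; even this case requires a computation in $\Gamma^2$, and the general-$d$ statement is substantially harder. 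Until you prove or explicitly invoke Amitsur's formula, the proof is incomplete at its central step.
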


Fix a residual representation $\p\colon G \to\GL_d(k)$ and consider the universal lifting $\rho^{\univ}$, which induces a $d$-dimensional pseudorepresentation $\D_{\rho^{\univ}}$. Let $\rtr_{\p} \subset R_{\p}^{\square,\univ}$ be the subring generated by the coefficients of characteristic polynomials of $\rho(g)$ for all $g \in G$. From the previous lemma we deduce that $\D_{\rho^{\univ}}$ factors through $\rtr_{\p}$. 

\begin{lemma}
 Let $\rps_{\p}$ be the universal pseudodeformation ring of the residual pseudorepresentation $\D_{\p}$ (see Sec \ref{pseudo} for precise definitions). Then there exists a natural surjection $\rps_{\p} \twoheadrightarrow \rtr_{\p}$. And if the answer to the second version of the above question is true, this surjection will be an isomorphism. We omit the subcript $\p$ for simplicity.
\end{lemma}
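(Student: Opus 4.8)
The plan is to produce the surjection from the universal property of $\rps$, and then, under the embedding hypothesis, to build an explicit two-sided inverse out of the genuine representation realizing the universal pseudodeformation.

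First I would construct $\pi\colon \rps\to\rtr$. By the preceding lemma (Corollary~7.11 of~\cite{1}) the determinant $\D_{\rho^{\univ}}$ attached to the universal framed lift factors through a determinant $\D_{\rtr}\colon\rtr[G]\to\rtr$ of dimension $d$. Reducing modulo the maximal ideal recovers $\D_{\p}$, so $\D_{\rtr}$ is a pseudodeformation of $\D_{\p}$; by the universal property of $\rps$ it is classified by a unique local homomorphism $\pi\colon\rps\to\rtr$ carrying the universal determinant $\D^{\univ}$ to $\D_{\rtr}$. Writing $c_i(g)\in\rps$ for the coefficients of $P(\D^{\univ},g)$ and $\Lambda_i(g)\in\rtr$ for those of $P(\D_{\rtr},g)=P(\D_{\rho^{\univ}},g)$, the map $\pi$ sends $c_i(g)\mapsto\Lambda_i(g)$. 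Since the $\Lambda_i(g)$ generate $\rtr$ by definition, $\pi$ is surjective.

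For the isomorphism I would invoke the hypothesis, applied to the universal pseudodeformation $\D^{\univ}$ itself: there is an injection $\rps\hookrightarrow B$ and a representation $\rho\colon G\to\GL_d(B)$ with $\D_{\rho}=\D^{\univ}$. The key observation is that the characteristic-polynomial coefficients of $\rho$ are tautologically those of $\D^{\univ}$, that is $\Lambda_i(\rho(g))=c_i(g)\in\rps\subset B$. After conjugating $\rho$ so that its reduction equals $\p$, the representation $\rho$ is a framed deformation of $\p$ over $B$, hence is classified by a ring homomorphism $\varphi\colon R^{\square,\univ}\to B$ with $\varphi\circ\rho^{\univ}=\rho$. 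Consequently $\varphi(\Lambda_i(g))=\Lambda_i(\rho(g))=c_i(g)\in\rps$, so $\varphi$ maps the subring $\rtr$ into $\rps$, and I set $\sigma\coloneqq\varphi|_{\rtr}\colon\rtr\to\rps$. It then remains only to check that $\sigma$ is inverse to $\pi$, which I would verify on generators: $\sigma(\pi(c_i(g)))=\sigma(\Lambda_i(g))=c_i(g)$ and $\pi(\sigma(\Lambda_i(g)))=\pi(c_i(g))=\Lambda_i(g)$. As the $c_i(g)$ topologically generate $\rps$ and the $\Lambda_i(g)$ generate $\rtr$, this gives $\sigma\circ\pi=\mathrm{id}_{\rps}$ and $\pi\circ\sigma=\mathrm{id}_{\rtr}$, so $\pi$ is an isomorphism.

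The main obstacle is the step where $\rho$ is promoted to a framed deformation of $\p$: a priori $\D_{\rho}=\D^{\univ}$ only guarantees that the reduction $\bar\rho$ has the same determinant—equivalently, the same semisimplification—as $\p$, not that $\bar\rho\cong\p$. When $\p$ is reducible these need not coincide, and one must either arrange $\bar\rho=\p$ after conjugation and a possible residue-field extension (which can be absorbed into the choice of $B$), or replace $R^{\square,\univ}$ by the lifting problem that $\rho$ genuinely classifies, so that $\varphi$—and hence the inverse $\sigma$—is actually defined. Establishing the well-definedness of $\sigma$ is the only non-formal point; everything else follows directly from the two universal properties.
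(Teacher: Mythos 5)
Your argument is essentially identical to the paper's: the surjection comes from the universal property of $\rps$ applied to $\D_{\rho^{\univ}}$ factored through $\rtr$, and the isomorphism comes from applying the embedding hypothesis to $\D^{\univ}$, classifying the resulting $\rho$ over $B$ via $R^{\square,\univ}$, and restricting to the coefficient subring to obtain the inverse (the paper phrases this last step as uniqueness of the classifying map for $\D^{\univ}$ rather than writing down $\sigma$ explicitly, but it is the same argument). The subtlety you flag --- that $\D_{\rho}=\D^{\univ}$ only pins down the semisimplification of $\bar{\rho}$, so the map $R^{\square,\univ}\to B$ is not automatically available, and likewise that $B$ need not lie in the category where $R^{\square,\univ}$ has its universal property --- is genuine, but the paper's own proof passes over it in exactly the same way.
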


Therefore any counter-example such that $\rps \ncong \rtr$ will give a counter-example to the above question. Now we begin the proof of the lemma.
\begin{proof}
Let $\D^{\univ}$ denote the universal pseudorepresentation.
By the universal property, there is a natural map $\rps \to \rtr$ which brings $\D^{\univ}$ to $\D_{\rho^{\univ}}$. Each generator $\Lambda_i(g)$ of $\rtr$ is in the image. In other words, it is a surjection.

Now suppose for any pseudorepresentation $\D \colon A[G] \to A$, there exists a ring $B$ with an injection $A \hookrightarrow B$, and a representation $\rho \colon G \to \GL(B)$ satisfying $\D_{\rho}$ is valued in $A$. Take $A$ to be $\rps$ and $\D$ to be $\D^{\univ}$. Consider the corresponding $B$ and $\rho$. By our assumption $\D_{\rho}$ coinsides with $\D^{\univ}$. By the universal property, there is a natural map $R^{\square,\univ} \to B$ taking $\rho^{\univ}$ to $\rho$, which also induces $\D_{\rho^{\univ}} \to \D_{\rho}$. Let $B^{\coeff}$ denote the subring generated by the coefficients of characteristic polynomials of $\rho(g)$. Similarly as above we will have the surjection $\rtr \to B^{\coeff}$ which brings $\D_{\rho^{\univ}}$ to $\D_{\rho}=\D^{\univ}$. Applying the universal property of pseudodeformations again, we conclude that $\rtr \cong \rps$.
\end{proof}

For Taylor-style pseudorepresentations (with the assumption $d! \in A^*$), Taylor showed in ~\cite{3}, relying on earlier results by Procesi~\cite{17}, that the answer to the previous question (the second version) is always yes in the case where $A$ is an algebraically closed field of characteristic zero; this result was extended, with a different method, to any algebraically closed field by Rouquier. The question was settled in 1996 for any local henselian ring $A$, in the case where the residual pseudocharacter $\bar{T}$ is absolutely irreducible, independently by Rouquier ~\cite{9} and Nyssen ~\cite{10}.

More generally, if $\bar{T}\colon A[G] \to A$ is multiplicity-free, then in ~\cite[Section~1.4]{8} it was shown every Cayley-Hamilton quotient of $A[G]$ is a GMA (\textit{generalized matrix algebra}). And any GMA over $A$ can be embedded, compatibly with  the trace function, in an algebra $M_d(B)$ for some $A$-algebra $B$ ~\cite[Section~1.3]{8}. As mentioned in ~\cite[P.~1630]{4}, all these arguments of ~\cite{8} have no dependence on the characteristic of $A$ or the invertibility of $d!$ in $A$. Therefore $\rps$ is precisely the GIT quotient ring $R=\Gamma(\mathcal{O}_{D_{\p}^{\square}})^{\text{GL}_d}$ ~\cite[Theorem~A(3)]{4}.

This paper deals with the case where $p=d=2$ and the residual pseudorepresentation is trivial. Our main result is that the map $\rps\twoheadrightarrow \rtr$ is \emph{not} an isomorphism, even for finite groups~$G$.

\begin{thm}[Theorem \ref{extraspecial}]
For the extraspecial group $G=2_+^{1+4}, \rps \ncong \rtr$. 
\end{thm}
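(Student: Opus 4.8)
The plan is to make both rings completely explicit and then detect the failure of injectivity of $\rps \twoheadrightarrow \rtr$ as an \emph{infinitesimal} phenomenon rather than a pointwise one. First I would fix the standard presentation of $G = 2^{1+4}_+ = D_8 * D_8$: generators $x_1,y_1,x_2,y_2$ of order $2$, a central involution $z$ with $z = [x_1,y_1] = [x_2,y_2]$, all other generator pairs commuting, so that $G^{ab} \cong (\Z/2)^4$ and $|G| = 32$. Working over $W(\F_2) = \Z_2$ with residue field $\F_2$, the framed deformations of the trivial $\p$ are tuples $(\rho(x_1),\dots,\rho(y_2)) \in (I + \m\cdot M_2)^4$ subject to the finitely many relations coming from the group law; this presents $R^{\square,\univ}$ as an explicit quotient of a power series ring in $16$ variables, and $\rtr$ as the subring generated by $\{\Lambda_1(g),\Lambda_2(g): g\in G\}$, which by the results quoted above is the $\GL_2$-invariant (GIT quotient) ring, i.e.\ the invariant theory of four $2\times2$ matrices in the sense of Procesi.

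Next I would present $\rps$ directly from Chenevier's axioms specialized to $d=2$: it is generated by the trace $t(g)=\Lambda_1(g)$ and determinant $\delta(g)=\Lambda_2(g)$, subject to (i) multiplicativity $\delta(gh)=\delta(g)\delta(h)$ and centrality $t(gh)=t(hg)$, (ii) the polarization identity $\Lambda_2(x+y)-\Lambda_2(x)-\Lambda_2(y)=\Lambda_1(x)\Lambda_1(y)-\Lambda_1(xy)$, and (iii) Cayley--Hamilton $t(g^2)=t(g)^2-2\delta(g)$. Since the residual determinant is $\mathbf 1\oplus\mathbf 1$, which is \emph{not} multiplicity free, the GMA/embedding machinery of \cite{8} does not apply, and there is no formal reason for $\rps$ to coincide with its GIT quotient $\rtr$.

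The heart of the argument is the comparison. Over any field every determinant is the determinant of a semisimple representation (Chenevier), so $\rps$ and $\rtr$ have the \emph{same} geometric points; the discrepancy must therefore live in the non-reduced/infinitesimal structure, which I would extract by computing the mod-$2$ tangent space (and, if needed, the first graded pieces of the completed local rings) on each side. On the $\rtr$ side the decisive constraint is that $G$ is a $2$-group, whose only irreducible in characteristic $2$ is trivial: every genuine lift is built from unipotent pieces, and the first-order identity $\det(I+\varepsilon c)=1+\varepsilon\,\mathrm{tr}(c)$, with $c\in\mathrm{Hom}(G,M_2(\F_2))$, ties the determinant deformation rigidly to the \emph{additive} trace deformation. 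On the $\rps$ side the axioms (i)--(iii) permit trace pseudodeformations $t_1$ that are arbitrary $z$-invariant class functions with $t_1(1)=0$ --- in particular \emph{non-additive} ones --- together with an independent determinant deformation. I expect this to yield a nonzero element of $\ker(\rps\to\rtr)$; the choice of $2^{1+4}_+$ over $D_8$ or $Q_8$ is precisely what makes this kernel nonzero, the coupling $z=[x_1,y_1]=[x_2,y_2]$ of the two $D_8$-factors being essential.

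The main obstacle is twofold. Conceptually, because the two rings agree at every field-valued point, no dimension count or pointwise argument suffices: one must pin down the exact non-reduced structure and, in particular, rule out that $\rtr$ secretly carries the same ``extra'' infinitesimal directions as $\rps$ --- its tangent space need not equal the image of the framed-deformation tangent space, since $\rtr$ is only a \emph{subring} of $R^{\square,\univ}$ and $\m_{\rtr}^2$ may be strictly smaller than $\m_{R^{\square,\univ}}^2\cap\rtr$. Technically, this forces an \emph{exact} computation, via Gröbner bases over $\F_2$ (and over $\Z$ to isolate the $p=2$ behaviour cleanly), of the two ideals of relations, together with a proof that the separating class is genuinely nonzero rather than a numerical artifact. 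Carrying out the degree-$2$ polynomial-law relations for all $32$ group elements and matching them against the matrix invariant theory is the computationally heavy, and decisive, step.
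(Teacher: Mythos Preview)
Your plan is essentially the same as the paper's --- explicit presentations of both rings, then a Gr\"obner-basis computation over $\F_2$ to exhibit a nonzero kernel element --- but two points are worth sharpening.

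First, your expectation that the discrepancy appears in the mod-$2$ tangent space is wrong: both $\m/\m^2$ have dimension $19$, and the difference lives in $\m^2/\m^3$ (the Hilbert series of $\rps/(2)$ and $\rtr/(2)$ are $1+19t+52t^2+\cdots$ versus $1+19t+51t^2+\cdots$). Your ``first-order identity $\det(I+\varepsilon c)=1+\varepsilon\,\mathrm{tr}(c)$'' heuristic therefore cannot separate the two rings; only your parenthetical hedge (``and, if needed, the first graded pieces'') saves the argument.

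Second, the paper does not search for the witness blindly but names it in advance: since every irreducible representation of $G$ over a characteristic-$0$ field has dimension $1$ or $4$, every $2$-dimensional semisimple representation factors through $G^{\mathrm{ab}}\cong(\Z/2)^4$ and hence kills the central involution $z$. This suggests $T(z)-2$ as the candidate, and indeed the paper verifies (i) $T(z)-2\notin\m^3+(2)$ in $\rps$ via the pseudorepresentation relations, and (ii) $\mathrm{tr}(\rho^{\univ}(z))-2=0$ already in the integral polynomial ideal $I\subset\Z[X_i]$ defining $R^{\square}$, hence in $\rtr$. Having this target makes the computation a single ideal-membership check on each side rather than a full comparison of graded pieces; your ``$z$-coupling'' remark is pointing at exactly the right element without quite naming it.

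One further caution: your identification of $\rtr$ with the $\GL_2$-invariant (GIT) ring is not justified in characteristic $2$ --- Procesi's generation by traces is a characteristic-$0$ statement --- and the paper treats the GIT ring as a separate object (noting only that $T(z)-2$ also dies there). For the theorem you only need $\rtr$ as the literal subring of $R^{\square,\univ}$ generated by the $\Lambda_i(g)$, so this misidentification is harmless here, but you should not rely on it.
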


The main idea of our proof is to explain how to explicitly compute (enough of) both sides to prove that these rings are different.
\subsection*{Notation and Conventions}
$k$ is a finite field of characteristic $p$. $G$ is a topological group satisfying the Mazur's finiteness condition $\Phi_p$ in \cite{6}. $d$ is a positive integer.

\section{Different Galois Deformation Functors}
\subsection{Deformation functors associated to $\p$}\label{deformingfunctor}

Let $\p \colon G\to \GL_d(k)$ be a continuous representation. Let $\C$ be the category of local Artinian $W(k)$-algebras $(A,\m_A)$ with a fixed identification $A/\m_A \cong k$. Let $D_{\p}^{\square} \colon \C \to \text{Sets}$ be the functor such that for $(A,\m_A) \in \C$, $D_{\p}^{\square}$ is the set of continuous representations $\rho_A \colon G \to \text{GL}_d(k)$, such that $\rho_A (\text{mod\ } \m_A) = \p$. We call such a $\rho_A$ a lifting of $\p$. The functor $D_{\p}^{\square}$ is pro-represented by a complete local Noetherian $W(k)$-algebra $R_{\p}^{\square}$ and we will have the universal lifting $\rho^{\univ}\colon G\to \text{GL}_d(R_{\p}^{\square})$ by Schlessinger's criterion. 

\begin{eg}\label{Example}
Suppose we have a trivial residual representation $\p\colon G\to\text{GL}_d(k)$ for a finitely presented group $G=\langle g_s|f_t\rangle_{1\le s\le n,1\le t\le m}$, where $g_s$ denote the generators and $f_t(g_1,\dots,g_n)$ denote the relations of the generators. Then a lifting $\rho$ to any $A$ is specified by the images of $g_s$, subject to the relations 
$$f_j(\rho(g_1),\dots,\rho(g_n))=0.$$
Let $[I_d+(x_s^{ij})]$ denote the matrix form of $\rho(g_s)$.
So we could take $$R^{\square}_{\p}=W(k)\llbracket\{x_s^{ij}\}_{1\le s\le n,1\le i,j\le d}\rrbracket/I$$
where the ideal of relations $I$ is generated by the ones given by the matrix equations
$$f_j([I_d+(x_1^{ij})],\dots,[I_d+(x_n^{ij})])=0.$$
\end{eg}
\begin{defi}
We define the \textit{coefficient subring} $\rtr_{\p}$ to be the subring of $R^{\square}_{\p}$ generated by the coefficients of the characteristic polynomials of the images of $g \in G$.
\end{defi}
\begin{rmk}
If the dimension $d$ is 2, $\rtr$ is generated by all the traces $\text{tr}(\rho^{\univ}(g))$ and determinants $\text{det}(\rho^{\univ}(g))$ for all $g \in G$. And if the the characteristic of $k$ is not 2, then $\rtr$ is generated by the traces of images of~$g \in G$ since $\mathrm{det}(g)=\frac{\mathrm{tr}(g)^2-\mathrm{tr}(g^2)}{2}$. That is why it was sometimes denoted by the "trace subring".
\end{rmk}

\subsection{Definition of Pseudorepresentations}\label{pseudo}
In this paper, we will mainly consider the pseudo-representations introduced by Chenevier~\cite{1} called "determinants". These in particular allow us to deal with cases where $p < d$. We will mainly be concerned with determinants of degree $d=2$ and $p=2$ in this paper. Let $G$ be a group, $A$ be a commutative unital ring, and let $M$ and $N$ be two $A$-modules. Let $\C_A$ be the category of commutative $A$-algebras. Each $A$-module $M$ gives rise to a functor $\underline{M}\colon \C_A \to \text{Sets}$ via the formula $B\mapsto M\otimes_A B$. 

\begin{defi}[Polynomial Law]
An $A$-valued polynomial law $\D$ between two $A$-modules $M$ and $N$ is by definition a natural transformation $\underline{M} \to \underline{N}$. In other words, it is a collection of maps $\D_B \colon M\otimes_AB\to N\otimes_AB$, where $B$ is any commutative $A$-algebra.
And one can extend the action of $A[G]$ on $M$ to an action of $B[G]$ on $M\otimes_AB$.
A polynomial law is called
\begin{enumerate}
    \item \textit{multiplicative}, if $\D(1)=1$ and $\D(xy)=\D(x)\D(y)$ for all $x,y \in A[G]\otimes B$, \item \textit{homogenenous of degree} $d$, if $\D(xb)=b^d\D(x)$ for all $x\in A[G]\otimes B$ and $b \in B$,
    \item \textit{continuous}, if its characteristic polynomial map on $G$ given by $g \mapsto P(\D,g)$ is continuous.
\end{enumerate}
\end{defi}
\begin{defi}
A degree $d$ determinant is a continuous $A$-valued polynomial law $\D\colon A[G]\to A$, which is multiplicative and homogeneous of degree $d$. 
\end{defi}
We follow the discussion of continuous determinants in ~\cite[Section~2.5]{1} and equip $A$, as well as all the commutative $A$-algebras $B$, with the discrete topology.
In the computation in this paper, we mainly consider the simplest examples, finite groups $G$ with discrete topology.
All representations and all determinants considered in this paper are continuous.

\begin{prop}{~\cite[Proposition 7.4]{1}}
The covariant functor $\det_A(A[G],d)$ associating to any commutative $A$-algebra $B$, the set of $B$-valued determinants $A[G]\otimes_A B \to B$ of dimension $d$, is representable by $\Gamma_A^d(A[G])^{ab}$ (the abelianization of divided powers of order $d$, its detailed definition could be found in ~\cite{1}).
\end{prop}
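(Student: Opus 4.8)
\section*{Proof Plan}

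The plan is to prove this representability statement by reducing it to the universal property of divided powers, following Chenevier's original argument in \cite{1}. The key observation is that a determinant is, by definition, a multiplicative polynomial law that is homogeneous of degree $d$, and there is a classical functorial correspondence between homogeneous polynomial laws of degree $d$ from an $A$-module $M$ and $A$-linear maps out of the $d$-th divided power $\Gamma_A^d(M)$. So the first step is to recall this correspondence: for any $A$-module $M$ and any target $A$-module $N$, homogeneous polynomial laws $M \to N$ of degree $d$ are in natural bijection with $A$-linear maps $\Gamma_A^d(M) \to N$. Applying this with $M = A[G]$ and $N = A$ (and base-changing along $A \to B$), I would identify degree-$d$ homogeneous $B$-valued polynomial laws on $A[G]\otimes_A B$ with $B$-linear maps $\Gamma_A^d(A[G])\otimes_A B \to B$, i.e.\ with $B$-points of the affine scheme represented by the symmetric-algebra-type object on $\Gamma_A^d(A[G])$.

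The second step is to impose the multiplicativity conditions $\D(1) = 1$ and $\D(xy) = \D(x)\D(y)$ and to check that these cut out exactly the abelianization. Under the divided-power correspondence, the multiplicative structure on $A[G]$ induces a comultiplication and the conditions $\D(1)=1$, $\D(xy)=\D(x)\D(y)$ translate into the requirement that the corresponding linear map $\Gamma_A^d(A[G]) \to B$ be a homomorphism of $A$-algebras for the natural (commutative) multiplication that $\Gamma_A^d$ inherits from the algebra structure on $A[G]$. The crucial point is that a $B$-valued determinant, being valued in the commutative ring $B$ and multiplicative, factors through the \emph{abelianization} $\Gamma_A^d(A[G])^{\mathrm{ab}}$: since the target $B$ is commutative, any algebra map out of $\Gamma_A^d(A[G])$ must kill the two-sided ideal generated by commutators, and conversely every such abelian-algebra map yields a multiplicative law. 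This gives the natural bijection
\[
\det\nolimits_A(A[G],d)(B) \;\cong\; \operatorname{Hom}_{A\text{-alg}}\bigl(\Gamma_A^d(A[G])^{\mathrm{ab}},\, B\bigr),
\]
functorially in $B$, which is exactly the assertion that the functor is represented by $\Gamma_A^d(A[G])^{\mathrm{ab}}$.

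The main obstacle, and the step requiring the most care, is verifying that multiplicativity of the polynomial law corresponds \emph{precisely} to the algebra-homomorphism condition on $\Gamma_A^d$ — in particular that homogeneity of degree $d$ is compatible with the grading on divided powers so that the product of two determinants again lands in degree $d$, and that no additional relations beyond commutators are needed. This is where one must use the detailed structure of $\Gamma_A^d(A[G])$ as a functor, including how the multiplication map $A[G]\otimes_A A[G] \to A[G]$ induces the required algebra structure, and one must confirm that the continuity condition is automatically compatible with this description for the discrete groups and discrete topology we consider. Since this is \cite[Proposition~7.4]{1}, I would not reprove the divided-power formalism from scratch; rather I would cite the polynomial-law/divided-power correspondence and focus the argument on the translation of the three defining conditions (multiplicative, homogeneous of degree $d$, continuous) into the abelian-algebra-homomorphism condition, thereby obtaining the claimed representability.
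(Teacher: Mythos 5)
Your proposal is correct and takes essentially the same route as the proof the paper relies on: the paper states this result as a citation of \cite[Proposition~7.4]{1}, and Chenevier's argument there is exactly what you describe --- Roby's correspondence between degree-$d$ homogeneous polynomial laws $M \to N$ and $A$-linear maps $\Gamma_A^d(M) \to N$ (together with the base-change compatibility $\Gamma_A^d(A[G])\otimes_A B \cong \Gamma_B^d(A[G]\otimes_A B)$), the fact (also due to Roby) that multiplicativity upgrades the linear map to an $A$-algebra homomorphism for the algebra structure $\Gamma_A^d$ inherits from $A[G]$, and factorization through the abelianization because the target $B$ is commutative. Your final caveat about continuity is also handled correctly: in this paper's setting ($G$ finite, $A$ and all $B$ discrete) the continuity condition is vacuous, so it imposes no further constraint on the representing object.
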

The "embedding problem" is local on $\mathrm{Spec}(\Gamma_{\Z}^d(\Z[G])^{ab})$ (\cite[Remark 7.20]{1}). Now localize this question and consider the pseudodeformation functor $F$. Given a residual determinant $\bar{\D}\colon k[G] \to k$, the covariant functor $F\colon \C \to \mathrm{Sets}$ associates to any discrete artinian local $W(k)$-algebra $A$, the set of $A$-valued determinants $\D\colon A[G] \to A$, such that $\D\otimes_A k =\bar{\D}$. 

\begin{prop}{~\cite[Proposition E]{1}}
    $F$ is prorepresentable by a complete local noetherian $W(k)$-algebra, which is constructed as a certain profinite completion of $\Gamma_{\Z}^d(\Z[G])^{ab}\otimes_{\Z}W(k)$.
\end{prop}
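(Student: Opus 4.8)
The plan is to obtain prorepresentability of $F$ formally, by exhibiting $F$ as the functor of points of the formal neighborhood of the residual determinant inside the (honestly representable) parameter space of all determinants, the only substantive input beyond formal nonsense being noetherianity. By the cited Proposition~7.4, the functor of $d$-dimensional determinants on $\Z[G]$ is representable by $R_0\colonequals\Gamma_\Z^d(\Z[G])^{ab}$, so that $d$-dimensional determinants $\Z[G]\otimes_\Z B\to B$ correspond naturally to ring homomorphisms $R_0\to B$. Tensoring with $W(k)$, the functor of $d$-dimensional determinants on $W(k)$-algebras is representable by $R_0\otimes_\Z W(k)$. The first step is to observe that the residual determinant $\bar{\D}\colon k[G]\to k$ is precisely the datum of a $W(k)$-algebra homomorphism $\pi\colon R_0\otimes_\Z W(k)\to k$, i.e. a point of $\mathrm{Spec}(R_0\otimes_\Z W(k))$ with residue field $k$; let $\m\colonequals\ker(\pi)$ be the corresponding maximal ideal.

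Next I would incorporate continuity. For a profinite $G$ and a discrete ring $A$, the characteristic-polynomial map of a continuous determinant is locally constant, so it factors through a finite quotient $G/U$ with $U$ open normal; hence continuous determinants on $A[G]$ are the colimit over $U$ of determinants on $A[G/U]$, each representable by the finitely generated $W(k)$-algebra $\Gamma_\Z^d(\Z[G/U])^{ab}\otimes_\Z W(k)$. Dualizing the transition maps $\Gamma_\Z^d(\Z[G])^{ab}\to\Gamma_\Z^d(\Z[G/U])^{ab}$ induced by $G\twoheadrightarrow G/U$, the representing object for continuous determinants is $\varprojlim_U\bigl(\Gamma_\Z^d(\Z[G/U])^{ab}\otimes_\Z W(k)\bigr)$, which is the profinite completion of $R_0\otimes_\Z W(k)$ appearing in the statement. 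I would then define $\rps$ to be the completion of this ring at the maximal ideal $\m$ determined by $\bar{\D}$, equivalently the double inverse limit over $U$ and $n$ of $\bigl(\Gamma_\Z^d(\Z[G/U])^{ab}\otimes_\Z W(k)\bigr)_\m/\m^n$, so that $\rps$ is by construction a complete local $W(k)$-algebra with residue field $k$. For finite $G$ the index $U$ plays no role and one simply completes $R_0\otimes_\Z W(k)$ at $\m$.

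To check the universal property on a test object $(A,\m_A)\in\C$, I would note that giving a determinant $\D\colon A[G]\to A$ of dimension $d$ with $\D\otimes_A k=\bar{\D}$ is exactly giving a $W(k)$-algebra homomorphism $R_0\otimes_\Z W(k)\to A$ whose reduction modulo $\m_A$ equals $\pi$. Since $\m_A$ is nilpotent, such a homomorphism carries $\m$ into $\m_A$ and therefore factors uniquely through the $\m$-adic completion; continuity of $\D$ forces it through a finite level $G/U$, hence through the profinite completion as well. This yields a natural bijection between $F(A)$ and the set of local $W(k)$-algebra homomorphisms $\rps\to A$, so $F$ is prorepresented by $\rps$, provided only that $\rps$ is a complete local \emph{noetherian} ring.

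The main obstacle is precisely this noetherianity, since a priori the inverse limit over $U$ could fail to be noetherian when $G$ is infinite. Here is where I would invoke the finiteness condition $\Phi_p$: it is designed to force the reduced tangent space $F(k[\epsilon]/(\epsilon^2))$, computed as $\mathrm{Hom}_k\bigl(\m/(\m^2+p\,\rps),\,k\bigr)$, to be finite-dimensional over $k$. Concretely, a first-order deformation of $\bar{\D}$ is governed by continuous $\F_p$-valued data on $G$ of the kind whose finiteness is exactly guaranteed by $\Phi_p$, so $\m/(\m^2+p\,\rps)$ has finite $k$-dimension $r$. A complete local ring whose maximal ideal has finite-dimensional cotangent space over the residue field is noetherian, and lifting a basis of $\m/(\m^2+p\,\rps)$ presents $\rps$ as a quotient of a power series ring $W(k)\llbracket t_1,\dots,t_r\rrbracket$; this simultaneously exhibits $\rps$ as complete local noetherian and finishes the proof.
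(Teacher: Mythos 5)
Note first that the paper contains no proof of this proposition at all: it is imported verbatim from Chenevier \cite[Proposition E]{1}, so the only meaningful comparison is with Chenevier's argument. Your architecture reproduces that argument faithfully: representability of the determinant functor by $R_0=\Gamma_{\Z}^d(\Z[G])^{ab}$, base change to $W(k)$, reduction of continuous determinants to finite quotients $G/U$, and completion at the maximal ideal cut out by $\bar{\D}$. The formal steps are essentially sound, with one small caveat: local constancy of the characteristic polynomial maps does not by itself let a determinant factor through $A[G/U]$, since multiplicativity of the polynomial law is a condition on all of $A[G]$, not just on group elements. One needs the kernel criterion (roughly, $u-1\in\ker(\D)$ iff $P(\D,uh)=P(\D,h)$ for all $h\in G$) together with compactness of $G$ and finiteness of the discrete ring $A$ (automatic here, since an Artinian local $W(k)$-algebra with residue field the finite field $k$ is a finite set) to extract a single open normal $U$ working uniformly. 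This is routine but is genuine content in Chenevier's treatment, not pure formality.

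The genuine gap is exactly at the step you yourself flag as the only substantive one: noetherianity. You assert that a first-order deformation of $\bar{\D}$ is ``governed by continuous $\F_p$-valued data on $G$ of the kind whose finiteness is exactly guaranteed by $\Phi_p$,'' but $\Phi_p$ only bounds continuous homomorphisms $G_0\to\F_p$ for open subgroups $G_0$, whereas a tangent vector to $F$ is a perturbation $\Lambda_i+\epsilon\lambda_i$ in which the functions $\lambda_i\colon G\to k$ satisfy the complicated multiplicative polynomial-law identities; they are in no visible way group homomorphisms, and you supply no mechanism converting their finiteness into an instance of $\Phi_p$. That conversion is precisely the mathematical heart of Chenevier's Proposition E: one first shows that the Cayley--Hamilton quotient of $k[G]$ attached to $\bar{\D}$ is finite-dimensional over $k$, then embeds the space of first-order deformations into continuous cochain/cohomology data valued in a finite $p$-torsion $G$-module built from that algebra, and only then does $\Phi_p$ (via inflation--restriction and finiteness of $\mathrm{Hom}_{\mathrm{cont}}(G_0,\F_p)$) yield finite-dimensionality of the tangent space. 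Without this bridge the proof does not close. Your concluding inference is fine --- a complete local $W(k)$-algebra with residue field $k$ and finite-dimensional cotangent space is a quotient of $W(k)\llbracket t_1,\dots,t_r\rrbracket$, hence noetherian --- but it is fed by the very finiteness statement that remains unproved in your sketch.
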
 
We denote it by the pseudodeformation ring $\rps$, and by construction, it is topologically generated by $\Lambda_i(g)$ for $g\in G$ and $i\ge 1$.


\subsection{Determinants of degree $d=2$}\label{ring}
Given a determinant $\D \colon A[G] \to A$, the corresponding characteristic polynomials $P(\D,m) \in B[X]$ for $m \in B[G]$ have degree $2$ and could be written in the form
$$P(m)\colonequals P(\D,m)=X^2-T(m)X+D(m)$$
for maps $T,D\colon B[G]\to B$. 

If the degree is $2$ and the residue characteristic is different from $2$, we can recover $D$ from $T$ via the identity $$D(\sigma)=\frac{T(\sigma)^2-T(\sigma^2)}{2}.$$
We can also recover $T$ from $D$ via the formula $$T(\sigma)=D(\sigma+1)-D(\sigma)-1.$$
We could regard a determinant $\D$ of $G$ over $A$ of degree $2$ as precisely given by a pair of functions $(T,D)$ satisfying the equations below.

\begin{lemma}{~\cite[Lemma~7.7]{1}}\label{TraceEq}
The above map $\D \mapsto(T, D)$ induces a bijection between the set of $A$-valued determinants of $G$ of dimension $2$ and the set of pairs of functions $(T, D) \colon G \rightarrow A$ such that $D\colon G \rightarrow A^*$ is a group homomorphism, $T\colon G \rightarrow A$ is a function with $T(1)=2$, and which satisfy for all $g, h \in G$:
\begin{enumerate}
    \item $T(g h)=T(h g)$,
    \item $D(g) T\left(g^{-1} h\right)-T(g) T(h)+T(g h)=0$.
\end{enumerate}
\end{lemma}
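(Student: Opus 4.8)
The plan is to establish the two directions of the asserted bijection separately, the reconstruction direction carrying essentially all of the content. Throughout, write $T=\Lambda_1$ and $D=\Lambda_2=\D$, restricted to $G\subset A[G]$.

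\emph{Determinant to pair.} Given a degree-$2$ determinant $\D$, multiplicativity immediately gives $D(gh)=\D(gh)=\D(g)\D(h)=D(g)D(h)$ and $D(g)D(g^{-1})=\D(1)=1$, so $D\colon G\to A^*$ is a homomorphism, and homogeneity of degree $2$ forces $P(\D,1)(X)=\D((X-1)\cdot 1)=(X-1)^2$, whence $T(1)=2$. For $(1)$ I would use the symmetric polarization $\beta(x,y):=\D(x+y)-\D(x)-\D(y)$ together with the degree-$2$ relation $\beta(x,y)=T(x)T(y)-T(xy)$ among the $\Lambda_i$: symmetry of $\beta$ then reads $T(xy)=T(yx)$. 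For $(2)$ the cleanest route is to expand $\D(h-Xg)$ over $A[X]$ in two ways. Since $h-Xg=g\,(g^{-1}h-X)$, multiplicativity gives $\D(h-Xg)=D(g)\,\big(X^2-T(g^{-1}h)X+D(g^{-1}h)\big)$, while the quadratic expansion gives $\D(h-Xg)=D(g)X^2-\beta(h,g)X+D(h)$. Reading off the coefficient of $X$ and using $(1)$ yields $D(g)T(g^{-1}h)=\beta(g,h)=T(g)T(h)-T(gh)$, which is $(2)$.

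\emph{Pair to determinant.} Conversely, given $(T,D)$ satisfying the listed conditions, I would reconstruct $\D$ explicitly as a homogeneous degree-$2$ polynomial law. Extend $T$ to an $A$-linear form on $A[G]$ and set $\beta(g,h):=T(g)T(h)-T(gh)$; condition $(1)$ is exactly what makes $\beta$ symmetric, hence well defined on unordered pairs. For every commutative $A$-algebra $B$ and $x=\sum_g b_g\,g\in B[G]$ put
\[
\D_B(x):=\sum_g b_g^2\,D(g)+\sum_{\{g,h\},\,g\neq h}b_g b_h\,\beta(g,h).
\]
This is natural in $B$ and homogeneous of degree $2$ by construction, and using $D(1)=1$ and $T(1)=2$ one computes $\D_B(X-g)=X^2-T(g)X+D(g)$, so the characteristic polynomial returns $(T,D)$. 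Uniqueness is automatic, since any homogeneous degree-$2$ law is determined by its values $D(g)$ on the basis $G$ and its polar form $\beta$.

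\emph{The main obstacle.} The crux is to show that the reconstructed $\D$ is \emph{multiplicative}, $\D(xy)=\D(x)\D(y)$. As both sides are polynomial laws in $(x,y)$, it suffices to verify the identity for generic $x=\sum_i t_i g_i$ and $y=\sum_j s_j h_j$ and to compare coefficients of the monomials in the independent indeterminates $t_i,s_j$; the diagonal coefficients reduce to $D(g_ih_j)=D(g_i)D(h_j)$, immediate from the homomorphism property, while the cross coefficients reduce, after repeated use of $(2)$, to the single trilinear identity
\[
T(ghu)+T(hgu)=T(g)T(hu)+T(h)T(gu)+T(u)T(gh)-T(g)T(h)T(u),\qquad g,h,u\in G.
\]
This is precisely the polarized Cayley--Hamilton (``fundamental trace'') identity in dimension $2$, and verifying it from $(2)$ is where I expect essentially all of the difficulty to lie. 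The delicate point is that in our residue characteristic $2$ one cannot obtain this trilinear relation by polarizing the diagonal specialization $T(g^2v)=T(g)T(gv)-D(g)T(v)$ of $(2)$ (there is no division by $2$ and no polarization shortcut), so $(2)$ must be exploited in its full, non-linearized strength --- for instance by passing to the quotient of $A[G]$ by the trace radical $\{z:T(zu)=0\ \forall u\}$, where the adjugation $x\mapsto T(x)\cdot 1-x$ realizes the Cayley--Hamilton relation and forces the identity. Once multiplicativity is established, the two constructions are mutually inverse and the bijection follows.
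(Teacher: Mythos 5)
The paper offers no proof of this lemma --- it is quoted directly from Chenevier~\cite[Lemma~7.7]{1} --- so there is nothing internal to compare against; judged on its own, your proposal is a correct reconstruction of the standard argument. The forward direction (the two expansions of $\D(h-Xg)$ and the symmetry of the polar form $\beta$) is complete as written, and the reconstruction of $\D_B$ from $(T,D)$ via $\beta(g,h)=T(g)T(h)-T(gh)$ is the right construction, with multiplicativity correctly isolated as the only substantive point.

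The one real issue is that you defer the computation you identify as carrying ``essentially all of the difficulty,'' namely deducing the trilinear identity from condition (2). In fact it is a three-line consequence of (2) and the multiplicativity of $D$, and no detour through a trace-radical quotient or adjugation is needed. Writing (2) as $T(xy)=T(x)T(y)-D(x)T(x^{-1}y)$, one gets
\[
T(ghu)+T(hgu)=T(g)T(hu)+T(h)T(gu)-\bigl(D(g)T(g^{-1}hu)+D(h)T(h^{-1}gu)\bigr),
\]
while applying (2) to the pair $(g^{-1}h,\,u)$ and multiplying through by $D(g)$ (so that $D(g)D(g^{-1}h)=D(h)$) yields
\[
D(g)T(g^{-1}hu)+D(h)T(h^{-1}gu)=D(g)T(g^{-1}h)\,T(u)=\bigl(T(g)T(h)-T(gh)\bigr)T(u),
\]
which is exactly the missing term. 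Your remark that one cannot polarize in residue characteristic $2$ is the right instinct --- it is precisely why (2) must be exploited in this unpolarized form --- but the identity still falls out directly. One further small imprecision: not all cross coefficients of $\D(xy)=\D(x)\D(y)$ reduce to the single trilinear identity; the monomials $t_i^2 s_j s_l$ produce the separate relation $\beta(gh,gh')=D(g)\beta(h,h')$. This, too, is immediate from $\beta(a,b)=D(a)T(a^{-1}b)$ (a restatement of (2)) together with $D$ being a homomorphism, so the argument goes through once these verifications are written out.
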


Given $g \in G$, we have a corresponding characteristic polynomial $P(g)=X^2-T(g)X+D(g)$. By ~\cite[Lemma~7.7]{1}, the functions $T$ and $D$ extends to functions from $A[G]$ to $A$. In the case of $T$, this extension is the linear extension, and in the case of $D$, it can be constructed explicitly by using the equation for $D(gX+hY)$, $g,h \in G$, given by ~\cite[Example~7.6]{1} $$D(gX+hY)\colonequals D(g)X^2+(T(g)T(h)-T(gh))XY+D(h)Y^2.$$
If $A$ is an algebraically closed field, then $(T,D)$ may be realized as the trace and determinant (in the classic definition) of an actual semisimple representation (Theorem A of ~\cite{1}).

By abuse of notation, now we also denote the pair $(T,D)$ by $P=(T,D)$. Let $\bar{P}=(\bar{T},\bar{D}) \colon G \to k$ be a degree $2$ pseudorepresentation.

\subsection{Pseudodeformation Rings of Finite Groups}
In this paper, we will only consider the deformation rings of two-dimensional trivial residual (pseudo)representations of a finite $p$-group $G$ over $k=\F_p$. Denote by $\p\colon G\to \text{GL}_2(\F_p)$ and $\bar{P}\colonequals(\text{tr}(\p),\text{det}(\p))$. Now by our assumptions, for any $g\in G, \bar{P}(g)=(\text{tr}(\p),\text{det}(\p))(g)=(2,1)$. 

\begin{rmk}
    We make this assumption because there will be a unique residual pseudorepresentation of $G$ mod $p$. The only irreducible representation of a $p$-group in character $p$ is the trivial representation (\cite[Proposition~26, Section 8.3]{18}). And in algebraically closed fields, pseudorepresentations always arise from genuine representations. 
\end{rmk}


\begin{prop}
$\rps$ is a finite $\Z_p$-algebra. 
\end{prop}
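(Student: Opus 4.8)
The plan is to produce a finite set of $\Z_p$-algebra generators of $\rps$ and to show that each one is integral over $\Z_p$; a commutative algebra generated by finitely many integral elements over a Noetherian ring is module-finite, and a short completeness argument then identifies this subalgebra with all of $\rps$. Recall that $k=\F_p$ gives $W(k)=\Z_p$, and that by construction $\rps$ is topologically generated over $\Z_p$ by the coefficients $\Lambda_i(g)$ of the universal characteristic polynomials, which for $d=2$ are exactly $\Lambda_1(g)=T(g)$ and $\Lambda_2(g)=D(g)$. Since $G$ is finite, this is a finite set of topological generators; let $R_0\subseteq\rps$ denote the $\Z_p$-subalgebra they generate.

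For the determinant coefficients, Lemma~\ref{TraceEq} tells us that $D\colon G\to(\rps)^{*}$ is a group homomorphism. As $G$ is a finite $p$-group, each $g$ has order $n=p^{k}$ for some $k$, so $D(g)^{n}=D(g^{n})=D(1)=1$. Hence $D(g)$ is a root of $X^{n}-1$ and thus integral over $\Z$, a fortiori over $\Z_p$.

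For the trace coefficients I would use relation~(2) of Lemma~\ref{TraceEq}. Setting $h=g^{m}$ gives $D(g)\,T(g^{m-1})-T(g)\,T(g^{m})+T(g^{m+1})=0$, i.e. the recursion $T(g^{m+1})=T(g)\,T(g^{m})-D(g)\,T(g^{m-1})$, with initial values $T(g^{0})=T(1)=2$ and $T(g^{1})=T(g)$. Writing $t=T(g)$ and $\delta=D(g)$, an easy induction shows $T(g^{m})=P_{m}(t)$ for a polynomial $P_{m}\in\Z[\delta][X]$ that is monic of degree $m$ in $X$. Taking $m=n$ the order of $g$, the identity $T(g^{n})=T(1)=2$ becomes $P_{n}(t)-2=0$, a monic degree-$n$ relation for $t$ with coefficients in $\Z[\delta]$. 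Thus $T(g)$ is integral over $\Z[D(g)]$; since $D(g)$ is integral over $\Z$, transitivity of integrality shows $T(g)$ is integral over $\Z$, hence over $\Z_p$.

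Consequently $R_0$ is generated over $\Z_p$ by finitely many integral elements, so it is a finitely generated $\Z_p$-module. Being a finite $\Z_p$-module, $R_0$ is compact, hence closed in the Hausdorff ring $\rps$; since the chosen generators topologically generate $\rps$, the subalgebra $R_0$ is also dense, and therefore $R_0=\rps$. This proves that $\rps$ is a finite $\Z_p$-algebra. I expect the only genuine subtlety to be the integrality of the traces $T(g)$, i.e. converting the abstract condition $g^{n}=1$ into a concrete monic relation via the trace recursion, whereas the integrality of the determinants and the final density--closedness step are routine.
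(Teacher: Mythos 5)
Your proof is correct, but it takes a genuinely different route from the paper: the paper's ``proof'' of this proposition is purely a citation (to Chenevier's Proposition~7.4, Example~7.5(2) and Proposition~7.56, and to Wang-Erickson's Proposition~2.13), whereas you give a self-contained argument. Your two key computations check out: $D(g)$ kills $X^{n}-1$ because $D$ is a homomorphism on the finite group, and the recursion $T(g^{m+1})=T(g)T(g^{m})-D(g)T(g^{m-1})$ obtained from relation~(2) of Lemma~\ref{TraceEq} with $h=g^{m}$ does produce, by induction from $P_{1}=X$, a polynomial $P_{m}$ monic of degree $m$ in $X$ over $\Z[\delta]$, so $P_{n}(T(g))-2=0$ is a genuine monic integral relation. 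The closing topological step also holds, though you could make it slightly more explicit: the inclusion of $R_{0}$ with its $p$-adic topology into $\rps$ with its $\m$-adic topology is continuous because $p^{n}R_{0}\subseteq\m^{n}$, so the compact module $R_{0}$ has closed image in the Hausdorff ring $\rps$ (Krull intersection), and density then forces $R_{0}=\rps$. What your approach buys is transparency for the specific situation at hand ($d=2$, $G$ a finite group, trivial residual pseudorepresentation): it makes visible exactly why finiteness holds, namely integrality of traces and determinants of torsion elements, which is the same mechanism underlying the cited general results of Chenevier. What it gives up is generality: the cited results cover arbitrary dimension $d$ (where one must handle all the $\Lambda_{i}$, not just $T$ and $D$) and profinite groups satisfying Mazur's condition $\Phi_{p}$, for which the elementary torsion argument is not available.
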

\begin{proof}
As shown in ~\cite[Proposition~7.4]{1}, ~\cite[Example~7.5(2)]{1} and~\cite[Proposition~7.56]{1}. Another proof is in ~\cite[Proposition~2.13]{4}.  
\end{proof}
  
Therefore we could regard $\rps$ as a finite $\Z_p$-module. 
\begin{defi}
We define a function $d(M)\coloneqq\text{dim}_{\F_p}(M\otimes_{\Z_p}\F_p)$ for a finitely generated $\Z_p$-module $M$. By Nakayama's lemma, we not only count the $\Z_p$-rank of the torsion-free part of $M$, but the generators of the torsion part. 
\end{defi}

From the surjection $\rps\to\rtr$, we will deduce that $d(\rps)\ge d(\rtr)$. Once we show they are free modules with the same rank, we can conclude that they are isomorphic. This is the strategy we use in the case of abelian groups and dihedral groups (see Section 4 for details).

\subsection{An Upper Bound of $d(\rps)$}

\begin{prop}\label{Span}
For any finite group $G$, we could find a generating set $B$ of the $\Z_p$-module $\rps$ satisfying $B \subset L:= \{T_i$,\ $D_i$,\ $T_i T_j\}_{i\ne j}$, where $T_i$ (resp.$D_i$) denote the trace (resp.determinant) of the elements $g_i \in G$.
\end{prop}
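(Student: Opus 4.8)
The plan is to exploit the description of $\rps$ as a $\Z_p$-algebra that is \emph{topologically} generated by the characteristic-polynomial coefficients $\Lambda_1(g)=T(g)$ and $\Lambda_2(g)=D(g)$ for $g\in G$ (recall $d=2$ here), together with the fact already recorded that $\rps$ is a finitely generated $\Z_p$-module. Write $R_0$ for the honest (non-completed) $\Z_p$-subalgebra of $\rps$ generated by all $T(g)$ and $D(g)$; then $R_0$ is spanned over $\Z_p$ by all monomials in these elements, and $R_0$ is dense in $\rps$. So it suffices to show that every such monomial lies in the $\Z_p$-submodule $M\coloneqq \Z_p\cdot L$, and then to upgrade density to equality.

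First I would extract the needed reduction identities from Lemma~\ref{TraceEq}. Multiplicativity of $D$ gives $D(g)D(h)=D(gh)$, so all determinant-factors inside a monomial collapse to a single $D(\cdot)$. Relation~(2), rewritten as $T(g)T(h)=T(gh)+D(g)T(g^{-1}h)$, trades a product of two traces for a single trace plus a determinant-times-trace; specialising $h=g$ and using $T(1)=2$ gives $T(g)^2=T(g^2)+2D(g)$, so squares of traces already lie in $M$. Finally, substituting $h\mapsto gh$ in relation~(2) yields the crucial identity
$$D(g)T(h)=T(g)T(gh)-T(g^2h),$$
which writes any determinant-times-trace as a product of two traces minus a single trace, hence as an element of $M$.

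With these in hand I would argue by induction on the number $r$ of trace-factors in a monomial, after its determinant-factors have been collapsed to one $D(\cdot)$. Applying $T(g_1)T(g_2)=T(g_1g_2)+D(g_1)T(g_1^{-1}g_2)$ to the first two trace-factors replaces the monomial by a $\Z_p$-combination of two monomials each with $r-1$ trace-factors and still a single determinant-factor (the new $D(g_1)$ being absorbed by multiplicativity). This reduces everything to $r\le 2$, i.e.\ to terms of shape $D(a)T(b)T(c)$, $D(a)T(b)$, or $D(a)$, and the three identities above place each of these in $M$; the degenerate cases where group elements coincide (so a putative $T_iT_j$ is really a square $T_i^2$, or a factor $T(1)=2$ appears) are absorbed by the $T(g)^2$ and $T(1)$ reductions, which is exactly why $L$ need not contain the squares $T_i^2$.

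The last step passes from $R_0\subseteq M$ to $\rps=M$. Since $\rps$ is a finitely generated $\Z_p$-module and $\Z_p$ is a complete Noetherian local ring, the finitely generated submodule $M$ is closed in $\rps$ (by Artin--Rees / Krull's intersection theorem). As $R_0$ is dense in $\rps$ and contained in the closed set $M$, we get $\rps=\overline{R_0}\subseteq M\subseteq\rps$, so $L$ generates $\rps$ as a $\Z_p$-module and any minimal generating subset $B\subseteq L$ finishes the proof. The main obstacle is the bookkeeping in the inductive reduction: one must guarantee that after each use of relation~(2) the determinant-factors genuinely recombine into a single $D(\cdot)$ so that the trace-degree strictly drops, and one must isolate the derived identity $D(g)T(h)=T(g)T(gh)-T(g^2h)$ that keeps the output inside $L$ rather than producing stray terms like $T_iD_j$.
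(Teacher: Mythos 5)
Your proposal is correct and follows essentially the same route as the paper: both rest on the identities from Lemma~\ref{TraceEq} (multiplicativity of $D$, the rewriting $D(g)T(h)=T(g)T(gh)-T(g^{2}h)$, and $T(g)^2=T(g^2)+2D(g)$) to reduce every monomial in the $T$'s and $D$'s to a $\Z_p$-combination of elements of $L$. Your version is somewhat more carefully organized (explicit induction on the number of trace factors, and the closing density-versus-closed-submodule argument to pass from the subring generated by the $\Lambda_i(g)$ to all of $\rps$, a step the paper leaves implicit), but the substance is the same.
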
 
\begin{proof}
We apply the Lemma \ref{TraceEq} to deduce the linear dependence between certain elements.
\begin{enumerate}
    \item $T_i D_j \in$ Span$\{L\}$, since $T_i D_j = T_s T_t - T_l$ for some $s,t,l$.
    \item $T_i T_j D_k \in$ Span$\{L\}$, since $T_i T_j D_k = T_l D_k - D_k D_t T_s $ for some $s,t,l$.
    \item $T_i T_j T_k \in$ Span$\{L\}$, since $T_i T_j T_k = D_l T_s T_k + T_t T_k$ for some $s,t,l$.
    \item $T_i^3 \in$ Span$\{L\}$ because of (3) and $T_i^2 \in$ Span$\{L\}$ since $T(g)^2=T(g^2)+2D(g)$.
    \item For terms of higher degrees, the product of determinants is still the determinant of certain element in the group $G$. And the product of at least $3$ traces could always be reduced to lower degrees because of (2) and (3).
    \item Suppose $g$ is in $[G,G]$, then $D(g)=1$, and for any $h \in G$, $T(g)T(h) = D(g)T(g^{-1}h)+T(gh)=T(g^{-1}h)+T(gh)$. This means $T(g)T(h) \in$  Span$\{L\}$.
\end{enumerate}
\end{proof}

\subsection{A Lower Bound of $d(\rtr)$}
Let $n_K\colonequals$ the number of all the two-dimensional semi-simple representations of $G$ over a splitting $p$-adic field $K$ of characteristic 0. By a splitting field $K$ of $G$, it means any linear representation $\rho_L$ of $G$ over an extension $L$ of $K$ is equivalent to a linear representation $\rho_K$ over $K$. Then over any splitting field of $G$, actually the number $n_K$ is a constant $n_G$ by definition. ~\cite[Proposition~5.1.12]{7} ensures that we could find a finite extension $K$ of $\Q_p$ that is a splitting field for $G$. Now for a fixed splitting field $K$, we label all the semi-simple two-dimensional representations as $\rho_1$, ..., $\rho_{n_G}$. Denote the integer ring of $K$ by $\mathcal{O}_K$, and the maximal ideal of $\mathcal{O}_K$ by $\m_K$. Fix a uniformizer $\pi_K$ of $K$.

\begin{prop}
There exists a splitting field $K$ and a subring $\mathcal{O}$ of $\mathcal{O}_K$, such that up to conjugation, the image of $\rho_i$ is contained in $\GL_2(\mathcal{O})$ and $\rho_i \colon G \to \GL_2(\mathcal{O})$ is a lifting of the trivial residual representation.
\end{prop}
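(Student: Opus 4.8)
The plan is to realize each $\rho_i$ explicitly over a small subring of $\mathcal{O}_K$ and then to read off its reduction. First I would use the standard averaging trick: since $G$ is finite, the representation $\rho_i\colon G\to\GL_2(K)$ stabilizes some $\mathcal{O}_K$-lattice (for any lattice $L_0$ the sum $L=\sum_{g\in G}\rho_i(g)L_0$ is $G$-stable), so after conjugating by a suitable element of $\GL_2(K)$ we may assume the image of $\rho_i$ lies in $\GL_2(\mathcal{O}_K)$. This gives the integrality ``up to conjugation''; the remaining points are to shrink the coefficient ring to a subring $\mathcal{O}$ with residue field $k=\F_p$ and to identify the reduction.

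Next I would use that every irreducible representation of the finite $p$-group $G$ has dimension a power of $p$. For $G=2_+^{1+4}$ the irreducible dimensions are $1$ and $2^2=4$, so there is no irreducible two-dimensional representation, and each two-dimensional semisimple $\rho_i$ is a direct sum $\chi_a\oplus\chi_b$ of linear characters. Every linear character factors through $G/[G,G]=G/Z(G)\cong(\Z/2)^4$ and therefore takes values in $\{\pm1\}$. Choosing the eigenbasis puts $\rho_i$ in the diagonal form $g\mapsto\mathrm{diag}(\chi_a(g),\chi_b(g))$, so in fact the image lies in $\GL_2(\Z)$. I would then take $\mathcal{O}=\Z_p\subseteq\mathcal{O}_K$, which has residue field exactly $\F_p$; for a general finite $p$-group one instead lets $\mathcal{O}$ be the subring of $\mathcal{O}_K$ generated over $\Z_p$ by the finitely many $p$-power roots of unity occurring as values of the relevant characters.

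Finally I would compute the reduction modulo the maximal ideal $\m$ of $\mathcal{O}$. Since the residue characteristic is $p$, any $p^m$-th root of unity $\zeta$ satisfies $(\zeta-1)^{p^m}=\zeta^{p^m}-1=0$ in the residue field, so $\zeta\equiv1\pmod{\m}$; for the extraspecial group this is just the observation that $-1\equiv1\pmod2$. Hence every diagonal entry reduces to $1$, giving $\rho_i\pmod{\m}=I_2$, which is precisely the trivial residual representation $\p$. Thus $\rho_i\colon G\to\GL_2(\mathcal{O})$ is a lifting of $\p$, as required.

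I expect the last step to be the real content. What makes it succeed is that each $\rho_i$ is a sum of characters and hence integrally diagonalizable, so the reduction can be computed entry by entry on the diagonal. For an irreducible two-dimensional representation the two noncommuting generators cannot be simultaneously diagonalized over $\mathcal{O}_K$, and the reduction is then forced to be a nontrivial unipotent rather than $I_2$; it is exactly the absence of such representations for $2_+^{1+4}$ (equivalently, the diagonalizability of every $\rho_i$) that the argument relies on.
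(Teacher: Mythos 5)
Your first step (averaging a lattice to conjugate the image into $\GL_2(\mathcal{O}_K)$) is correct and matches the paper's, but the rest of your argument only proves the proposition for representations that decompose as a sum of linear characters, and that is not the generality in which the statement is made or used. The proposition is formulated for an arbitrary finite $p$-group $G$ and for \emph{all} two-dimensional semisimple $\rho_i$ over a splitting field; in Section~4.2 it is applied to the dihedral groups $D_{2m}$, whose $\frac{1}{2}(m-2)$ \emph{irreducible} two-dimensional representations are exactly the ones contributing the term $\frac{m-2}{2}$ to the lower bound on the rank of $\rtr$. Your diagonalization argument (and your proposed general fix via adjoining $p$-power roots of unity) silently assumes $\rho_i$ is a direct sum of characters, so it covers $2_+^{1+4}$ and the abelian case but not the case the proposition is actually needed for.

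Moreover, your closing paragraph asserts the opposite of what the proposition claims: you say that for an irreducible two-dimensional $\rho_i$ the reduction ``is forced to be a nontrivial unipotent rather than $I_2$.'' That is false once one is allowed to enlarge the splitting field, which the statement explicitly permits (``there exists a splitting field $K$''). The paper handles the irreducible case as follows: after conjugating the compact image into $\GL_2(\mathcal{O}_K)$, the reduction mod $\m_K$ of the $p$-group $G$ lands in a Sylow $p$-subgroup of $\GL_2(\F_q)$, hence is unipotent, so one may assume $\rho(g)\equiv\left(\begin{smallmatrix}1&*\\0&1\end{smallmatrix}\right)\bmod{\pi_K}$; passing to the ramified quadratic extension $L=K(\sqrt{\pi_K})$ (still a splitting field) and conjugating by $\mathrm{diag}(\pi_L,1)$ with $\pi_L=\sqrt{\pi_K}$ rescales the off-diagonal entries by $\pi_L^{\pm1}$, after which every entry of $\rho(g)-I_2$ is divisible by $\pi_L$ and the reduction is trivial. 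This ramified base-change step is the missing idea in your writeup; without it your argument fails precisely on the representations that make the dihedral computation work.
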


\begin{proof}

Given any $\rho \colon G \to \GL_2(K)$, we could conjugate the image of $\rho$ (which is compact) into $\GL_2(\mathcal{O}_K)$ (the maximal compact subgroup).

Suppose $\mathcal{O}_K/\m_K \cong \F_q$. Now consider $\p \colon G \to \GL(\F_q)$ defined as  $\p \coloneqq \rho \text{ mod } \m_K$. $\text{GL}_2(\F_q)$ has the order $(q^2-1)(q-1)q$. Then its Sylow-$p$ group has order $q$ and is conjugate to the group $H=\{\left(\begin{smallmatrix}1&*\\0&1
\end{smallmatrix}\right)\}$. Then the image of $\p$ of the finite $p$-group $G$ would be contained in a Sylow-$p$ group of $\text{GL}_2(\F_q)$. Therefore $\p$ is unipotent.

If $\p$ is nontrivial, consider $L=K(\sqrt{\pi_K})$ and the scalar extension of $\rho$ into $L$. Fix the uniformizer $\pi_L \coloneqq \sqrt{\pi_K}$ of $L$, then $\rho(g)= \left(\begin{smallmatrix}1+\pi_L^2a(g)& b(g)\\ \pi_L^2c(g)&1+\pi_L^2d(g)
\end{smallmatrix}\right)$. Conjugate it by $\left(\begin{smallmatrix}\pi_L&0\\0&1
\end{smallmatrix}\right)$ and get $\left(\begin{smallmatrix}1+\pi_L^2a(g)& \pi_Lb(g)\\\pi_Lc(g)&1+\pi_L^2d(g)
\end{smallmatrix}\right)$, which is trivial mod $\pi_L$.

By definition $L$ is also a splitting field of $G$. Take $\mathcal{O}\coloneqq $ the subring of $\mathcal{O}_L$ containing $\pi_L$ with the residue field $\F_p$.

\end{proof}

Now we could give the lower bound of $d(\rtr)$.
\begin{prop}\label{Rank}
The $\Z_p$-rank of the torsion-free part of $\rtr$ (as a $\Z_p$-module) is at least $n_G$.
\end{prop}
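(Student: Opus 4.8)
The plan is to exhibit $n_G$ linearly independent elements in $\rtr \otimes_{\Z_p} \Q_p$ by using the $n_G$ semisimple representations $\rho_1, \dots, \rho_{n_G}$ furnished by the previous proposition to detect distinct points of $\mathrm{Spec}(\rtr)$. The key observation is that each $\rho_i \colon G \to \GL_2(\mathcal{O})$, being a lifting of the trivial residual representation, induces by the universal property of $R^{\square,\univ}$ a $W(k)$-algebra homomorphism $\phi_i \colon R^{\square}_{\p} \to \mathcal{O}$ carrying $\rho^{\univ}$ to $\rho_i$. Restricting $\phi_i$ to the coefficient subring $\rtr \subset R^{\square}_{\p}$ gives a ring map $\psi_i \colon \rtr \to \mathcal{O}$ which sends each generator $\Lambda_j(g)$ to the corresponding coefficient of the characteristic polynomial of $\rho_i(g)$. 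In the degree-$2$ case this means $\psi_i(T_g) = \mathrm{tr}(\rho_i(g))$ and $\psi_i(D_g) = \det(\rho_i(g))$.

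Assembling these gives a single homomorphism $\Psi = (\psi_1, \dots, \psi_{n_G}) \colon \rtr \to \mathcal{O}^{n_G}$, and after tensoring with $\Q_p$ and extending scalars to $K$ (a splitting field) I would argue that the induced map $\rtr \otimes_{\Z_p} K \to K^{n_G}$ is surjective. The $\Z_p$-rank of the torsion-free part of $\rtr$ is the dimension of $\rtr \otimes_{\Z_p} \Q_p$ as a $\Q_p$-vector space, which equals $\dim_K (\rtr \otimes_{\Z_p} K)$; surjectivity of $\Psi \otimes K$ onto $K^{n_G}$ forces this dimension to be at least $n_G$, giving the bound.

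The crux of the argument is therefore the surjectivity of $\Psi \otimes K$, equivalently the claim that the $n_G$ points $\psi_1, \dots, \psi_{n_G}$ of $\mathrm{Spec}(\rtr \otimes K)$ are pairwise distinct. Two semisimple representations over $K$ are isomorphic if and only if they have the same character, and since the $\rho_i$ are distinct isomorphism classes of semisimple representations, there must for each pair $i \neq j$ exist some $g \in G$ with $\mathrm{tr}(\rho_i(g)) \neq \mathrm{tr}(\rho_j(g))$, hence $\psi_i(T_g) \neq \psi_j(T_g)$. This separates the points. To upgrade point-separation to surjectivity onto $K^{n_G}$, I would invoke a Chinese-Remainder-style argument: the distinct $K$-points correspond to distinct maximal ideals $\m_1, \dots, \m_{n_G}$ of $\rtr \otimes K$ with residue field $K$, and the quotient by their intersection is $\prod_i K = K^{n_G}$, whose $K$-dimension $n_G$ is a lower bound for $\dim_K(\rtr \otimes K)$.

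The step I expect to be the main obstacle is verifying that the points are genuinely distinct as scheme-theoretic points, i.e.\ that the trace functions of inequivalent semisimple representations really do separate them over $\rtr$ rather than merely over the full lifting ring $R^{\square}_{\p}$. The subtlety is that $\rtr$ is generated only by \emph{coefficients of characteristic polynomials}, so I must ensure that the separating data comes from traces and determinants alone and not from other matrix entries; this is exactly guaranteed because, for semisimple two-dimensional representations, the character (the trace function, together with the determinant) is a complete isomorphism invariant. Once this is granted, the remaining linear-algebra and commutative-algebra steps are routine.
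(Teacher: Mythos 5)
Your proposal is correct and follows the same overall strategy as the paper: both use the $n_G$ semisimple representations to produce a ring map from $\rtr$ into $\mathcal{O}^{n_G}$ (the paper packages the $\rho_i$ into a single representation $\rho\colon G\to\GL_2\bigl(\prod_i\mathcal{O}\bigr)$ and restricts the induced map $R^{\square}_{\p}\to\prod_i\mathcal{O}$ to the coefficient subring; you assemble the individual maps $\psi_i$, which amounts to the same thing), then tensor with $K$ and bound the image below by $n_G$. Where you genuinely diverge is the justification of that last step. The paper asserts that the traces $\mathrm{tr}(\rho_i)$ are ``linearly independent'' and concludes $A^{\coeff}\otimes_{\Z_p}K=K^{n_G}$; taken literally this is delicate, because the trace functions of inequivalent two-dimensional \emph{semisimple} (as opposed to irreducible) representations need not be linearly independent: if $\chi_1,\dots,\chi_4$ are distinct one-dimensional characters, then $\mathrm{tr}(\chi_1\oplus\chi_2)+\mathrm{tr}(\chi_3\oplus\chi_4)=\mathrm{tr}(\chi_1\oplus\chi_3)+\mathrm{tr}(\chi_2\oplus\chi_4)$, so the vectors $(\mathrm{tr}\,\rho_1(g),\dots,\mathrm{tr}\,\rho_{n_G}(g))$ may span a proper subspace of $K^{n_G}$. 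What makes the conclusion true is exactly the mechanism you invoke: the image is a unital sub\emph{algebra} of $K^{n_G}$, and a unital subalgebra separating the $n_G$ factors is all of $K^{n_G}$; separation holds because inequivalent semisimple representations in characteristic zero have distinct characters. Your point-separation-plus-Chinese-Remainder formulation is thus the more robust version of the paper's argument, at the cost of making explicit the (standard, but necessary) fact that characters determine semisimple representations; the remaining reductions you flag as routine (identifying the $\Z_p$-rank of the torsion-free part with $\dim_{\Q_p}(\rtr\otimes\Q_p)=\dim_K(\rtr\otimes K)$) are indeed routine for a finite $\Z_p$-module.
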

\begin{proof}
Consider $A=\mathop{\prod}\limits_{i=1}^{n_G} \mathcal{O}$. Now we construct a representation $\rho \colon G \to \text{GL}_2(A)$ by letting $$[\rho(g)]_{ij}=([\rho_1(g)]_{ij},...,[\rho_{n_G}(g)]_{ij}), 1 \le i,j \le 2$$
where $M_{ij}$ denote the entry in the $i$-th row and $j$-th column of the matrix $M$. 

From Example~\ref{Example}, we know $R^{\square}_{\p}=\Z_p\llbracket\{x_g^{ij}\}_{g \in G,1\le i,j\le 2}\rrbracket/I$. We then have a ring map $R^{\square}_{\p} \to A$ taking $x_g^{ij}$ to $[\rho(g)]_{ij}$ for $\{i,j\}=\{1,2\}$ or $\{2,1\}$, and taking $x_g^{ij}$ to $[\rho(g)]_{ij}-1$ for $\{i,j\}=\{1,1\}$ or $\{2,2\}$. $\rtr \to A^{\coeff}$ is a surjection as all the generators of $A^{\coeff}$ (the traces and determinants of $\rho(g)$) are in the image. 

After tensoring with $K$, we have the surjection $\rtr\otimes_{\Z_p} K \to A^{\coeff}\otimes_{\Z_p} K$. From well-known results, $\text{tr}(\rho_i)$ are linearly-independent, so $A^{\coeff}\otimes_{\Z_p} K = K^{n_G}$. Therefore as a $K$-module, the rank of $\rtr\otimes_{\Z_p} K$ is at least $n_G$. And the $K$-rank of $\rtr\otimes_{\Z_p} K$ equals to the $\Z_p$-rank of the torsion-free part of $\rtr$, . 
\end{proof}

\section{An Example Where Deformation Rings Are Different: Extraspecial Group}
We consider the extraspecial group $G=2^{1+4}_+$ of order 32, with the prime $p=2$, the degree $d=2$. The explicit presentation of this group is:
$$G =\langle a,b,c,d | a^4=b^2=d^2=1, c^2=a^2, bab=a^{-1}, ac=ca, ad=da, bc=cb, bd=db, dcd=a^2c \rangle$$
$G$ is a quotient of $D_8 \oplus D_8 = \langle a,b|a^4=b^2=1, bab=a^{-1} \rangle \oplus \langle c,d|c^4=d^2=1, dcd=c^{-1}\rangle$ (modulo by the relation $a^2=c^2$).
\begin{thm}\label{extraspecial} In this case,
$\rps \ncong \rtr$.
\end{thm}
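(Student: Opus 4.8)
The plan is to separate $\rps$ and $\rtr$ using the invariant $d(-)$ together with the surjection $\rps \twoheadrightarrow \rtr$. That surjection already gives $d(\rps) \ge d(\rtr)$, so it suffices to establish the strict inequality $d(\rps) > d(\rtr)$: an isomorphism would force equality, and strictness shows the surjection has nontrivial kernel. Concretely I would prove that $\rtr$ is a free $\Z_2$-module of rank exactly $n_G$, while $\rps$ carries an additional $\Z_2$-module generator (expected to be $2$-torsion) not present in $\rtr$, so that $d(\rps) \ge n_G + 1 > n_G = d(\rtr)$.

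First I would pin down $n_G$ and $d(\rtr)$. Since $G$ is extraspecial with $[G,G]=Z(G)=\langle a^2\rangle\cong\Z/2$, the abelianization is elementary abelian of order $16$, giving $16$ linear characters, all valued in $\{\pm 1\}$ and hence all reducing to the trivial character mod $2$. As $G$ has no irreducible representation of dimension $2$ (its only non-linear irreducible is $4$-dimensional), every two-dimensional semisimple representation is a sum $\chi_i\oplus\chi_j$ of linear characters, so $n_G=\binom{16}{2}+16=136$. By Proposition~\ref{Rank} this yields $\operatorname{rank}_{\Z_2}\rtr\ge 136$. To obtain $d(\rtr)=136$ exactly, I would show that the evaluation map $\rtr\to\prod_{i=1}^{136}\mathcal{O}$ built from these semisimple representations (as in the proof of Proposition~\ref{Rank}) is injective, so that $\rtr$ is torsion-free; its geometric points in characteristic $0$ are exactly the $136$ classes $\chi_i\oplus\chi_j$, and verifying that the generic fibre $\rtr\otimes\Q_2$ is reduced then gives $d(\rtr)=136$.

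Next I would compute $\rps$ explicitly as a $\Z_2$-algebra. By Lemma~\ref{TraceEq} a determinant $\D=(T,D)$ is encoded by the class function $T$ and the homomorphism $D\colon G\to A^{\ast}$, which factors through $G^{\mathrm{ab}}$ so that each $D(g)$ is a square root of $1$ congruent to $1$, subject to relations~(1) and~(2) of that lemma. Proposition~\ref{Span} reduces the generators to $L=\{T_i,D_i,T_iT_j\}_{i\ne j}$; I would then impose the full system of trace relations coming from all $32$ elements of $G$ --- a finite, computer-assisted computation --- to read off the $\Z_2$-module structure of $\rps$. The expectation is that the Heisenberg relation $dcd=a^2c$ linking the two $D_8$ factors forces a trace identity that holds only up to $2$-torsion, producing a nonzero class $x\in\rps$ with $2x=0$, equivalently $d(\rps)=136+t$ with $t\ge 1$.

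Combining the two computations, $\rtr$ is torsion-free while $\rps$ has an extra generator, so $d(\rps)>d(\rtr)$ and the surjection $\rps\twoheadrightarrow\rtr$ cannot be an isomorphism; hence $\rps\ncong\rtr$. The main obstacle is the explicit determination of this extra generator in $\rps$: one must handle the trace identities of Lemma~\ref{TraceEq} for all elements of $G$ simultaneously and \emph{certify} that the candidate class is genuinely nonzero --- not forced to vanish by the relations, and not merely an artifact of an inefficient presentation. This certification, which amounts to computing the special fibre $\rps\otimes\F_2$ and comparing its $\F_2$-dimension with $n_G$, is the computational heart of the argument.
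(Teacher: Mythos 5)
Your overall strategy---separating the two rings by the invariant $d(-)$, with $d(\rps)=137$ and $d(\rtr)=n_G=136$---is sound in outline and in fact matches the dimension count recorded in the paper's closing remark (your computation $n_G=\binom{16}{2}+16=136$ and the identification of the $2$-torsion class coming from the relation $dcd=a^2c$, namely $T(a^2)-2$, are both correct). But there is a genuine gap at the decisive step, which is the $\rtr$ side. Knowing $d(\rps)=137$ and $\mathrm{rank}_{\Z_2}(\rtr)\ge 136$ only sandwiches $d(\rtr)$ between $136$ and $137$; to conclude you must rule out $d(\rtr)=137$, i.e.\ show that $\rtr$ has \emph{no} torsion. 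Your proposed method---injectivity of the evaluation map $\rtr\to\prod\mathcal{O}$, or reducedness of the generic fibre $\rtr\otimes\Q_2$---does not do this: the generic fibre only controls the free rank, and any $2$-torsion class (the natural candidate being exactly the image of $T(a^2)-2$) is invisible there. Whether that class survives in $\rtr$ is precisely the content of the theorem, so at this point the argument is circular: you are assuming the torsion-freeness of $\rtr$ that you need to prove.

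The paper closes this gap by a direct integral computation rather than a dimension count: it presents the framed lifting ring as $\Z[X_i]/I$ (Example~\ref{Example}) and verifies in Sage that $\mathrm{tr}(\rho^{\univ}(a^2))-2$ lies in the ideal $I$, hence vanishes identically in $R^{\square}$ and a fortiori in its subring $\rtr$; on the other side a Magma computation with the relations of Lemma~\ref{TraceEq} shows $T(a^2)-2\notin \m^3+I$ in $\rps/(2)$, so the class is nonzero there. A single explicit element that is nonzero in $\rps$ and zero in $\rtr$ then shows the surjection $\rps\twoheadrightarrow\rtr$ has nontrivial kernel (which, for Noetherian rings, rules out any abstract isomorphism), and this is also how the paper subsequently pins down $\dim_{\F_2}\rtr/(2)=136$. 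To repair your write-up, replace the appeal to reducedness of the generic fibre by an integral certification that the relevant trace identity holds in the lifting ring over $\Z$ (or $\Z_2$), not merely after inverting $2$.
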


We prove it by constructing a nonzero element in $\rps$ which maps to $0$ in $\rtr$.

\begin{rmk}
$G$ has the property that the commutator group $[G,G]=\{1,a^2\}=\{1,c^2\}$ is isomorphic to $\Z/2\Z$ and $G^{\mathrm{ab}}=G/[G,G] \cong (\Z/2\Z)^4$. So over a splitting field of characteristic 0, all the irreducible finite dimensional representations of the group $G$ consist of 16 irreducible one-dimensional representations and 1 irreducible four-dimensional representations. It follows that over fields of characteristic $0$, $a^2$ is in the intersection of $\mathrm{Ker} \rho$ for all two-dimensional representations $\rho$, that is, for any two-dimensional representations, the trace $\mathrm{tr}(a^2)$ is always $2$.

So if we pass to the generic fiber of $R^{\square}$, $\mathrm{tr}(a^2)-2$ vanishes. It is in the torsion part. A natural question arises as whether $\mathrm{tr}(a^2)-2$ always vanishes in every lifting of the trivial residual representation.
\end{rmk}

\begin{lemma}
$T^{\univ}(a^2)-2$ is nonzero in $\rps$.
\end{lemma}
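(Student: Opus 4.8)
The plan is to certify the nonvanishing by producing a single pseudodeformation in which the trace of $a^2$ differs from $2$. By the universal property of $\rps$, it suffices to exhibit an Artinian local $W(k)$-algebra $(A,\m_A)$ with residue field $\F_2$ together with a degree-$2$ determinant $\D\colon A[G]\to A$ reducing to the trivial residual determinant and satisfying $T(a^2)\neq 2$; the classifying map $\rps\to A$ then carries $T^{\univ}(a^2)-2$ to $T(a^2)-2\neq 0$, so $T^{\univ}(a^2)-2$ cannot vanish already in $\rps$. By Lemma~\ref{TraceEq} I would build the datum $(T,D)$ by hand rather than the determinant itself: a homomorphism $D\colon G\to A^{*}$ (which must factor through $G^{\mathrm{ab}}\cong(\Z/2\Z)^4$) and a conjugation-invariant function $T\colon G\to A$ with $T(1)=2$, subject only to relation~(2). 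Two identities drive the construction: taking $h=g$ in relation~(2) gives $T(g^2)=T(g)^2-2D(g)$, and taking $g=h=a^2$ (using $a^4=1$ and $D(a^2)=1$) gives $T(a^2)^2=4$. Over a characteristic-zero domain these force $T(a^2)=2$, so the sought deviation is a torsion phenomenon and $A$ must have characteristic~$2$.

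For the test ring I would take $A$ to be a commutative local $\F_2$-algebra whose maximal ideal has a basis of nilpotents $s_v$, indexed by the order-$4$ classes $v\in G^{\mathrm{ab}}$ (those with $v^2=a^2$, e.g.\ the classes of $a$ and of $c$), all squaring to one common nonzero element $w$, with $s_v s_{v'}=0$ for $v\neq v'$ and $\m_A^3=0$. I then set $D\equiv 1$, put $T(g)=s_{\bar g}$ when $g$ has order $4$, $T(g)=0$ when $g\neq a^2$ has order at most $2$, and $T(a^2)=w$. The role of the identification $a^2=c^2$ is exactly that it forces $T(a)^2=T(c)^2=T(a^2)$: in $A$ the elements $s_{\bar a}$ and $s_{\bar c}$ are \emph{distinct} square roots of the common value $w$, which is what lets $T(a^2)=w$ be nonzero even though $T(a^2)-2$ vanishes in every genuine lift.

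The remaining work is to check relation~(2), now reading $T(g^{-1}h)+T(gh)=T(g)T(h)$, for all pairs $g,h$. Using the conjugacy structure of $G$ (the center $\{1,a^2\}$ and the fifteen two-element classes $\{g,ga^2\}$) together with the commutators $[a,b]=[c,d]=a^2$, most pairs collapse to $0=0$, while the diagonal order-$4$ cases reduce to $w=w$; this verification is finite and routine once the ring is fixed. The genuine obstacle—and the reason a naive one-parameter deformation such as $\F_2[\varepsilon]/(\varepsilon^3)$ fails—is the cross term $T(a)T(c)=s_{\bar a}s_{\bar c}$ forced by the pair $(g,h)=(a,c)$ with $\bar a\neq\bar c$: relation~(2) demands that it vanish, yet one simultaneously needs $s_{\bar a}^2=s_{\bar c}^2=w\neq 0$. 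Reconciling these is precisely why the traces of the distinct order-$4$ classes must be taken to be independent nilpotents with vanishing pairwise products but equal nonzero squares, and pinning down the correct ring supporting this is the crux. Equivalently, one may present $\rps$ by the generators $T(g),D(g)$ and the relations of Lemma~\ref{TraceEq}, invoke the finite spanning set of Proposition~\ref{Span}, and locate $T(a^2)-2$ as a nonzero torsion element by a direct (machine-assisted) computation.
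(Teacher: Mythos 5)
Your proof is correct, but it takes a genuinely different route from the paper's. The paper presents $\rps/(2)$ by the generators $T(g)-2$, $D(g)-1$ and the relations of Lemma~\ref{TraceEq}, and certifies $T(a^2)-2\neq 0$ by a Magma computation showing $T(a^2)-2\notin\m_R^3+I$. You instead exhibit an explicit Artinian witness and invoke the universal property of $\rps$. Your construction does work: take $A=\F_2[s_1,\dots,s_6]/\bigl(s_is_j\ (i\neq j),\ s_i^2-s_j^2,\ \m_A^3\bigr)$ with $w:=s_i^2\neq 0$, the indices running over the six cosets $g[G,G]$ consisting of elements of order $4$; set $D\equiv 1$, $T(g)=s_{\bar g}$ on those cosets, $T(a^2)=w$, and $T=0$ elsewhere. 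Relation~(2) of Lemma~\ref{TraceEq} then closes up: since $g^{-1}\in\{g,ga^2\}$ for every $g$, the left side $T(g^{-1}h)+T(gh)$ vanishes unless $gh\in\{1,a^2\}$, while the right side $T(g)T(h)$ is nonzero only when $\bar g=\bar h$ is an order-$4$ coset (the product $w\cdot T(h)$ always dies in $\m_A^3$), which forces $gh\in\{1,a^2\}$ and makes both sides equal to $w$; all remaining cases give $0=0$. This finite check, which you leave as ``routine,'' should be written out, but it does succeed. What your approach buys is a computer-free and conceptual certificate: it shows that the torsion class $T^{\univ}(a^2)-2$ survives precisely because the six order-$4$ cosets of $2^{1+4}_+$ admit pairwise-orthogonal nilpotent ``square roots'' of a common nonzero element, whereas the paper's Gr\"obner computation is quicker to set up and additionally produces the quantitative data (the bound $\m^3$, the Hilbert series) used in the subsequent remarks. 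Your preliminary observation that $T(a^2)^2=4$ forces any such example to live in residue characteristic $2$ also correctly explains why the phenomenon is invisible on the generic fiber, in agreement with the paper's remark preceding the lemma.
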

\begin{proof}
We claim $T^{\univ}(a^2)-2$ is nonzero in $\rps/(2)$, the pseudodeformation ring over $\F_2$. $\rps/(2)$ is a finite ring, so we can use Magma to evaluate it. In Magma, we first construct a multivariate polynomial ring $R$ over $\F_2$ with variables $\{T(g)-2,D(g)-1\}$ for all $g \in G$. Then we define an ideal $I$ containing all the relations of pseudorepresentations as in Lemma \ref{TraceEq}. Let $\m_R$ be the maximal ideal generated by all variables and after completion we will get $\rps = \underset{k\to\infty}{\varprojlim} R/(\m_R^k+I)$. Magma shows that $T(a^2)-2 \notin \m_R^3+I$, therefore it is nonzero in $\rps/(\m^3_{\rps},2)$. Detailed codes could be found in the Appendix.
\end{proof}

\begin{lemma}
$\mathrm{tr}(\rho^{\univ}(a^2))-2$ is zero in $\rtr$.
\end{lemma}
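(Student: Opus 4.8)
The plan is to show that $\mathrm{tr}(\rho^{\mathrm{univ}}(a^2)) - 2$ maps to $0$ in $\rtr$, which by definition is the subring of $R^{\square,\univ}_{\p}$ generated by the coefficients of the characteristic polynomials of the images $\rho^{\univ}(g)$. The key structural input is the previous remark: over any splitting field $K$ of characteristic $0$, the element $a^2 \in [G,G]$ acts as the identity in every two-dimensional representation, so that $\mathrm{tr}(\rho(a^2)) = 2$ for all such $\rho$. The strategy is to promote this characteristic-zero vanishing to an identity that already holds integrally in $\rtr$, exploiting that $\rtr$ is a finitely generated $\Z_p$-module whose torsion-free rank is controlled by the semisimple representations $\rho_1, \dots, \rho_{n_G}$ constructed in Proposition~\ref{Rank}.

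First I would use the representation $\rho\colon G \to \GL_2(A)$ with $A = \prod_{i=1}^{n_G}\mathcal{O}$ assembled from the $\rho_i$ in the proof of Proposition~\ref{Rank}, together with the induced surjection $\rtr \to A^{\coeff}$. Because $a^2$ lies in the commutator subgroup and each $\rho_i$ is a two-dimensional representation over a field of characteristic zero, each component $\rho_i(a^2)$ is the identity matrix (it must act as a scalar by Schur on the irreducible pieces and be trivial on the one-dimensional constituents, since $a^2 \in [G,G]$), so $\mathrm{tr}(\rho(a^2)) = (2, \dots, 2)$ and the image of $\mathrm{tr}(\rho^{\univ}(a^2)) - 2$ in $A^{\coeff}$ is zero. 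This shows $\mathrm{tr}(\rho^{\univ}(a^2)) - 2$ lies in the kernel of $\rtr \to A^{\coeff}$, and after tensoring with $K$ this kernel is precisely the torsion of $\rtr \otimes_{\Z_p} K$, which vanishes; hence $\mathrm{tr}(\rho^{\univ}(a^2)) - 2$ is a torsion element of $\rtr$.

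To conclude that it is actually zero (not merely torsion), I would argue that $\rtr$ is torsion-free. The cleanest route is to show directly that $R^{\square,\univ}_{\p}$, or at least its coefficient subring $\rtr$, embeds into a product of characteristic-zero domains. Concretely, every lifting $\rho_A$ of the trivial residual representation to an Artinian $W(k)$-algebra factors through liftings that are detected by the universal family; combining the map $R^{\square} \to A = \prod \mathcal{O}$ above with the fact that the $\mathrm{tr}(\rho_i)$ are linearly independent over $K$, one sees that $\rtr \otimes_{\Z_p} K \cong A^{\coeff}\otimes_{\Z_p} K = K^{n_G}$ and that the natural map $\rtr \to \rtr \otimes K$ is injective once torsion is ruled out. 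The safest formulation is: the element $\mathrm{tr}(\rho^{\univ}(a^2)) - 2$ vanishes in $A^{\coeff}$ for the universal semisimple family, and since by Proposition~\ref{Rank} the torsion-free rank of $\rtr$ equals $n_G = \dim_K(A^{\coeff}\otimes K)$, the surjection $\rtr \to A^{\coeff}$ becomes an isomorphism after killing torsion, so its kernel is exactly the torsion submodule; as $\mathrm{tr}(\rho^{\univ}(a^2)) - 2$ lies in this kernel, it is torsion, and I would then verify it is $2$-power torsion and that no such nonzero torsion can exist in $\rtr$ because $\rtr$ is generated by traces and determinants valued in $\Z_p$-flat lifts.

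The main obstacle I anticipate is the passage from ``torsion'' to ``zero.'' The characteristic-zero argument only shows that $\mathrm{tr}(\rho^{\univ}(a^2)) - 2$ dies after inverting $p$, i.e.\ that it is $p$-power torsion in $\rtr$; ruling out that it is a nonzero torsion element requires knowing that $\rtr$ itself has no $p$-torsion, or equivalently that it injects into $\rtr \otimes_{\Z_p}\Q_p$. This is exactly the point where the asymmetry with $\rps$ enters: the previous lemma shows $T^{\univ}(a^2) - 2$ is \emph{nonzero} in $\rps/(2)$, so if one could also show it is torsion in $\rps$ the two rings would still differ, but the whole content of the theorem is that this torsion phenomenon happens in $\rps$ and not in $\rtr$. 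Thus I expect the crux to be a clean proof that $\rtr$ is torsion-free (for instance by exhibiting it as a subring of a product of discrete valuation rings via the explicit semisimple liftings $\rho_i$ over $\mathcal{O}$), after which the vanishing of $\mathrm{tr}(\rho^{\univ}(a^2)) - 2$ in $\rtr$ follows immediately from its vanishing in every $\rho_i$.
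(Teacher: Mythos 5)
There is a genuine gap, and it is exactly the one you flag yourself: the passage from ``torsion'' to ``zero.'' Your characteristic-zero argument correctly shows that $\mathrm{tr}(\rho^{\univ}(a^2))-2$ maps to $0$ in $A^{\coeff}\subset\prod\mathcal{O}$ (each $\rho_i$ is a sum of two characters since the only higher-dimensional irreducible of $G$ is $4$-dimensional, and $a^2\in[G,G]$ kills every character), hence that the element dies after inverting $2$ and is a $2$-power-torsion element of $\rtr$. But none of your proposed routes to torsion-freeness of $\rtr$ is sound. The assertion that no nonzero torsion can exist ``because $\rtr$ is generated by traces and determinants valued in $\Z_p$-flat lifts'' proves too much: $\rps$ is likewise topologically generated by the $T(g)$ and $D(g)$, all of which become $\Z_2$-flat in every characteristic-zero specialization, and yet the preceding lemma shows $\rps$ \emph{does} contain the nonzero $2$-torsion element $T^{\univ}(a^2)-2$. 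The surjection $\rtr\to A^{\coeff}$ is a priori only a surjection ($\rtr$ lives inside $R^{\square}$, not inside $\prod\mathcal{O}$), so identifying its kernel with the torsion submodule does not by itself kill the kernel. The dimension-count route ($d(\rtr)=\mathrm{rank}=n_G=136$ forces the torsion to vanish) is circular in the paper's logical order: the computation $\dim_{\F_2}\rtr/(2)=136$ appears in the remark \emph{after} this lemma and is deduced from it; without the lemma one only knows $136\le d(\rtr)\le d(\rps)=137$, which leaves room for exactly one torsion generator --- precisely the situation that occurs for $\rps$.

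The paper sidesteps all of this with a direct integral computation. Writing the framed deformation ring as the completion of $\Z[X_i]/I$, where the $X_i$ are the matrix entries of the generators and $I$ is the ideal of relations coming from the presentation of $G$ as in Example~\ref{Example}, it verifies by an ideal-membership (Gr\"obner basis) computation in Sage that the polynomial $\mathrm{tr}(\rho(a^2))-2=x_1^2+x_4^2+2x_1+2x_4+2x_2x_3$ already lies in $I$ over $\Z$. Hence the element vanishes in all of $R^{\square}$, a fortiori in its subring $\rtr$. To rescue a conceptual proof along your lines you would need an independent proof that $\rtr$ is $\Z_2$-flat, which is essentially equivalent to the lemma itself; the explicit computation is the honest way through.
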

\begin{proof}
We first construct the lifting ring as in the Example \ref{Example}. Here we start from $R = \Z[X_i]/I$ with $X_i$ the entries of $\rho^{\univ}(g)$ of the generators of $G$, and $I$ is the corresponding equations from the relations of the generators. We will show $\text{tr}(\rho^{\univ}(a^2))-2$ is $0$ in $\Z[X_i]/I$, thus it is $0$ in $R^{\square}=\Z_2[[X_i]]/I$, which is the completion of the previous ring. Then it must be $0$ in the subring $\rtr$ of $R^{\square}$. Detailed codes could be found in the Appendix.
\end{proof}
\begin{rmk}
As discussed in ~\cite[Theorem~2.20]{4}, the kernel and cokernel of the ring map from $\rps$ to the GIT quotient ring of $R^{\square}$ (invariants under the adjoint action of $\GL_2$), consists of finite modules of $p$-torsion nilpotents. By the discussion above, the element $\mathrm{tr}(\rho^{\univ}(a^2))-2$ is also zero in the GIT ring, so the kernel of the ring map is nontrivial, containing the element $T^{\univ}(a^2)-2$.
\end{rmk}
\begin{rmk}
For a complete local $k$-algebra $(R,\m)$ with residue field $k$, let 
$$H_R(t)=\sum_{n=0}^{\infty}\text{dim}_k(\m^n/\m^{n+1})t^n\in \Z[[t]]$$
denote the corresponding Hilbert series. Then we could compute in Magma that
$$H_{\rps/(2)}(t)=1+19t+52t^2+49t^3+16t^4.$$
Detailed codes could be found in the Appendix.

To get the Hilbert series of $H_{\rtr/(2)}$, we consider the deformation rings for the group $H \coloneqq (\Z/2\Z)^4 \cong G/[G,G]$, denoted by $\rps_H$ and $\rtr_H$. As proved in the following section, these two deformation rings $H$ are isomorphic. And we have the following diagram:
\begin{center}  
\begin{tikzcd}
\rps\arrow[->>]{r} \arrow[->>]{d} & \rtr \arrow[->>]{d} \\
 \rps_{H} \arrow["\sim"]{r} & \rtr_H
\end{tikzcd}
\end{center}
Let $R_H$ denote the deformation rings $\rps_H$ or $\rtr_H$. From the diagram we have 
$$\rps \twoheadrightarrow \rtr \twoheadrightarrow R_H.$$
We could computed in Magma that $\rps/(2)$ has dimension $137$  and $R_H/(2)$ has dimension $136$ as $\F_2$-vector spaces. Therefore $\rtr/(2)$ is isomorphic to either  $\rps/(2)$ or $R_H/(2)$. In this section we already proved that $T(a^2)-2$ is nontrivial in $\rps/(2)$ but it is trivial in $\rtr$. So $\rtr/(2) \cong R_H/(2)$ and we get the Hilbert series 
$$H_{\rtr/(2)}(t)=H_{R_H/(2)}=1+19t+51t^2+49t^3+16t^4.$$
in Magma.
\end{rmk}

\section{Examples Where Deformation Rings Are Isomorphic: Abelian Groups and Dihedral Groups}
\subsection{Abelian Groups}
\begin{thm}
If $G$ is an abelian $2$-group, then $\rps \cong \rtr \cong \Z_2^r$ for some positive integer $r$.
\end{thm}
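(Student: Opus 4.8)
The plan is to compute $\rps$ completely and on the nose, then force $\rtr$ to agree with it by a rank count driven by the surjection $\rps \twoheadrightarrow \rtr$ and the lower bound of Proposition~\ref{Rank}. The starting point is that, since $G$ is abelian, the group algebra $\Z[G]$ is \emph{commutative}, and for a commutative algebra the universal degree-$2$ determinant is the ring of symmetric $2$-tensors. Concretely I would invoke Chenevier's description of determinants of a commutative ring~\cite{1} to identify $\Gamma^2_{\Z}(\Z[G])^{\mathrm{ab}} \cong \mathrm{TSym}^2_{\Z}(\Z[G]) = (\Z[G]\otimes_{\Z}\Z[G])^{S_2} = \Z[G\times G]^{S_2}$, where $S_2$ permutes the two tensor factors. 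Both $\Gamma^2(-)^{\mathrm{ab}}$ and the formation of $S_2$-invariants of a permutation module commute with the flat base change $\Z\to\Z_2$, so $\Gamma^2_{\Z}(\Z[G])^{\mathrm{ab}}\otimes_{\Z}\Z_2 \cong \Z_2[G\times G]^{S_2}$.

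Next I would check that no completion is lost, i.e. that $\rps$ is all of this ring. Because $\F_2[G]$ is local (as $G$ is a $2$-group) and idempotents lift along the $2$-adically complete ring $\Z_2[G]$, the scheme $\mathrm{Spec}\,\Z_2[G]$ is connected; hence $\mathrm{Sym}^2(\mathrm{Spec}\,\Z_2[G]) = \mathrm{Spec}\,\Z_2[G\times G]^{S_2}$ is connected, so this finite $\Z_2$-algebra is local. Being local and finite over $\Z_2$ it is already complete and equals its own completion at the unique residual determinant, which by Proposition~E of~\cite{1} is $\rps$. Finally, $\Z_2[G\times G]^{S_2}$ is the module of $S_2$-invariants of a permutation module, hence \emph{free} over $\Z_2$ with basis the orbit sums; counting orbits of the swap action on $G\times G$ (the $|G|$ diagonal points together with $\binom{|G|}{2}$ transposed pairs) gives $\rps \cong \Z_2^{\,r}$ with $r = |G|(|G|+1)/2 = \binom{|G|+1}{2}$. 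This $r$ is exactly $n_G$, the number of $2$-dimensional semisimple representations of $G$ (unordered pairs of characters), all of which reduce to the trivial representation since $G$ is a $2$-group.

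With $\rps$ pinned down, the isomorphism with $\rtr$ is purely formal. Tensoring the surjection $\rps \twoheadrightarrow \rtr$ with $\Q_2$ shows the torsion-free rank of $\rtr$ is at most $\mathrm{rank}_{\Z_2}\rps = n_G$, while Proposition~\ref{Rank} gives it is at least $n_G$; hence it equals $n_G$. Writing $0 \to K \to \rps \to \rtr \to 0$, additivity of rank forces $\mathrm{rank}\,K = 0$, and since $K$ sits inside the torsion-free module $\rps$ it must vanish; so the surjection is an isomorphism and $\rtr \cong \rps \cong \Z_2^{\,n_G}$. The step deserving the most care, and the real obstacle, is the first paragraph: justifying the identification $\rps \cong \mathrm{TSym}^2_{\Z_2}(\Z_2[G])$ — both the commutative-determinant statement and the claim that completion at the residual point recovers the whole (local) ring. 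As a fallback that avoids the abstract identification, one can reduce to cyclic factors via $G \cong \prod_i \Z/2^{e_i}$ and verify freeness and the rank $n_G$ directly from the generating set $\{T_i, D_i, T_iT_j\}$ of Proposition~\ref{Span}, at the cost of a more laborious bookkeeping of the relations in Lemma~\ref{TraceEq}.
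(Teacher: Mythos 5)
Your proof is correct, but it reaches the conclusion by a genuinely different route from the paper. The paper never computes $\rps$ explicitly: it bounds $d(\rps)=\dim_{\F_2}(\rps\otimes\F_2)$ from above by counting the generating set $\{T_i, D_i, T_iT_j\}$ of Proposition~\ref{Span} (getting $\tfrac{1}{2}l(l+1)$ for $l=|G|$, after discarding $T(1)=2$, $D(1)=1$), matches this against the lower bound $n_G=\tfrac{1}{2}l(l+1)$ on the free rank of $\rtr$ from Proposition~\ref{Rank}, and squeezes via the surjection $\rps\twoheadrightarrow\rtr$. You instead identify $\rps$ on the nose: since $\Z[G]$ is commutative and free over $\Z$, Roby's theorem (as used by Chenevier~\cite{1}) gives $\Gamma^2_{\Z}(\Z[G])^{\mathrm{ab}}=\Gamma^2_{\Z}(\Z[G])\cong \mathrm{TS}^2_{\Z}(\Z[G])=\Z[G\times G]^{S_2}$, and after base change to $\Z_2$ this finite $\Z_2$-algebra is local (no nontrivial idempotents, since $\F_2[G\times G]$ is local for the $2$-group $G\times G$ and invariants of a permutation module commute with reduction mod $2$) hence complete and equal to $\rps$; freeness and the rank $\binom{l+1}{2}=n_G$ then come for free from the orbit-sum basis. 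Your closing step (rank count forces the kernel of $\rps\twoheadrightarrow\rtr$ to be a torsion submodule of a free module, hence zero) is sound and slightly cleaner than the paper's, which has to rule out torsion in both rings simultaneously from the generator count. What each approach buys: yours yields an explicit description of $\rps$ and is more conceptual, but leans on the identification of the representing object and the locality/completion argument, which are the delicate points you correctly flag; the paper's counting argument is more elementary and, importantly, is the one that extends to the dihedral case in the next subsection, where $\Z[G]$ is no longer commutative and the symmetric-tensor description is unavailable.
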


\begin{proof}
We will compare the upper bound of $d(\rps)$ and the lower bound of the rank of the torsion-free part of $\rtr$ and show they are the same (and deduce the two deformation rings are both free $\Z_2$-modules).

Denote the cardinality of the abelian group $G$ as $l$. Therefore there are $l$ different conjugacy classes of $G$. Recall in Prop.~\ref{Span} $d(\rps)$ is no larger than the cardinality of the set $L\colonequals \{T_i$,\ $D_i$,\ $T_i T_j\}_{i\ne j}$. Notice that $T(1)=2$, $D(1)=1$. We count the nontrivial conjugacy classes first, and get $(l-1)+(l-1)+\frac{1}{2}(l-1)(l-2)$. Finally we add the element $1$ into the basis. Then $$d(\rps) \le 1+(l-1)+(l-1)+\frac{1}{2}(l-1)(l-2)=\frac{1}{2}(l+1)l.$$ 
Recall in Prop.\ref{Rank}, denote the set $$S\colonequals\{\text{two-dimensional semi-simple representations over a splitting field of characteristic }\ 0\},$$we have the $\Z_2\text{-rank of torsion-free part of the\ }\Z_2\text{-module\ } \rtr$ is at least $$|S| = l+\frac{1}{2}l(l-1)=\frac{1}{2}(l+1)l.$$
Recall in the introduction we have shown $\rps\twoheadrightarrow\rtr$, $$d(\rps) \ge d(\rtr) = \mathrm{rank}_{\Z_2}(\text{torsion-free part of }\rtr)+d(\text{torsion part of }\rtr).$$ By the above discussion, both $\rps$ and $\rtr$ are torsion-free and are isomorphic to each other.
\end{proof}

\subsection{Dihedral Groups}
\begin{thm}
If we have a dihedral group $D_{2m}$, $m$ is a power of $2$ ($m \ge 2$), then $\rps \cong \rtr \cong \Z_2^s$ for some positive integer $s$.
\end{thm}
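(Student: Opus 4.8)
The plan is to~mirror the abelian case: bound $d(\rps)$ from above using Proposition~\ref{Span}, bound the $\Z_2$-rank of the torsion-free part of $\rtr$ from below using Proposition~\ref{Rank}, and show these two numbers coincide—forcing both rings to be free $\Z_2$-modules of the same rank, hence isomorphic via the surjection $\rps\twoheadrightarrow\rtr$. Write $D_{2m}=\langle r,s\mid r^m=s^2=1,\ srs=r^{-1}\rangle$ with $m$ a power of $2$. The two numbers to compute are the size of a spanning set $B\subset L$ for $\rps$ and the count $n_G$ of two-dimensional semisimple representations over a splitting field of characteristic $0$.

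\medskip

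First I would compute $n_G$ by standard character theory of dihedral groups. The group $D_{2m}$ has commutator subgroup $\langle r^2\rangle$ of order $m/2$, so its abelianization is $(\Z/2\Z)^2$, giving $4$ one-dimensional representations. The remaining irreducibles are two-dimensional, induced from nontrivial characters of the cyclic subgroup $\langle r\rangle$; there are $(m/2)-1$ of these (since $\chi$ and $\chi^{-1}$ give equivalent two-dimensional representations, and $\chi=\chi^{-1}$ only for the characters already accounted for). The semisimple two-dimensional representations are then: the $(m/2)-1$ irreducible ones, plus direct sums of pairs of one-dimensional characters. Counting $\binom{4}{2}=6$ distinct sums of two distinct characters and $4$ doubled characters yields $10$ reducible ones, so $n_G = \frac{m}{2}-1+10 = \frac{m}{2}+9$. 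I would double-check this count against the $m=2$ and $m=4$ cases directly.

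\medskip

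Next I would compute the upper bound for $d(\rps)$. Here the key leverage is item~(6) of Proposition~\ref{Span}: because $D(g)=1$ for every $g$ in the commutator subgroup $[G,G]=\langle r^2\rangle$, every product $T(g)T(h)$ with $g\in[G,G]$ already lies in $\mathrm{Span}\{L\}$ and need not be counted separately. So the generating set $B$ consists of $1$, the traces $T_i$ and determinants $D_i$ of the nontrivial conjugacy classes, together with products $T_iT_j$ only where neither $g_i$ nor $g_j$ lies in $[G,G]$. The conjugacy classes of $D_{2m}$ are well known: the identity, the class $\{r^{m/2}\}$ (central), the $(m/2)-1$ classes $\{r^k,r^{-k}\}$, and two classes of reflections. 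I would tabulate which classes have trivial determinant (those inside $\langle r^2\rangle$, namely the powers of $r^2$) versus nontrivial determinant, prune the $T_iT_j$ products accordingly, and also eliminate determinants that are forced to equal $1$. After this bookkeeping the upper bound should collapse to exactly $\frac{m}{2}+9$.

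\medskip

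\textbf{The main obstacle} will be the second computation: getting the upper bound from Proposition~\ref{Span} to match $n_G$ \emph{exactly} rather than merely dominating it. The naive count over $L$ vastly overshoots, and the reduction relies delicately on tracking which conjugacy classes lie in $[G,G]$, which determinants are automatically $1$, and which trace-products are eliminated by items~(1)--(6). The risk is double-counting or missing a relation, so the safest route is to first establish the chain of surjections $\rps\twoheadrightarrow\rtr$ together with $\mathrm{rank}_{\Z_2}(\rtr)\ge n_G$, then argue that the pruned spanning set has size at most $n_G$; the inequality $n_G\le d(\rtr)\le d(\rps)\le n_G$ then forces equality, torsion-freeness, and the isomorphism simultaneously. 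As in the abelian proof, once both bounds are $\frac{m}{2}+9$, the conclusion $\rps\cong\rtr\cong\Z_2^s$ with $s=\frac{m}{2}+9$ follows formally, with no need to exhibit the isomorphism explicitly.
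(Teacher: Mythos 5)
Your overall strategy is exactly the paper's: sandwich $d(\rps)$ between the upper bound from Proposition~\ref{Span} and the lower bound $n_G$ from Proposition~\ref{Rank}, and your count $n_G=\frac{m}{2}+9=\frac{m+18}{2}$ agrees with the paper's. The gap lies precisely in the step you yourself flag as the main obstacle, and the pruning rule you actually state does not close it. You propose to keep all products $T_iT_j$ for which neither $g_i$ nor $g_j$ lies in $[G,G]=\langle r^2\rangle$, discarding the rest via Proposition~\ref{Span}(6). But for $m\ge 8$ the number of conjugacy classes of $D_{2m}$ not contained in $\langle r^2\rangle$ is $\frac{m}{4}+2$ (the $\frac{m}{4}$ classes $\{r^k,r^{-k}\}$ with $k$ odd, plus the two reflection classes), not $3$; your spanning set then has size at least
$$1+\frac{m+4}{2}+3+\binom{\frac{m}{4}+2}{2},$$
which already for $m=8$ gives $16>13=\frac{m}{2}+9$, with the discrepancy growing in $m$. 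The chain $n_G\le d(\rtr)\le d(\rps)\le n_G$ therefore never closes, and as written your argument only succeeds for $m\in\{2,4\}$.

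The missing idea, which the paper uses (quite tersely, citing only item (6)), is a stronger reduction: modulo the $\Z_2$-span of $\{1,T_i,D_i\}$, the class of the product $T(g)T(h)$ should depend only on the images of $g$ and $h$ in $G^{\mathrm{ab}}\cong(\Z/2\Z)^2$, so that only $\binom{3}{2}=3$ products survive --- one for each unordered pair of distinct nontrivial cosets of $[G,G]$. Concretely you must show both that $T(gc)T(h)\equiv T(g)T(h)$ for $c\in[G,G]$, and that $T(g)T(h)$ is redundant when $g$ and $h$ lie in the \emph{same} nontrivial coset (there $g^{-1}h\in[G,G]$ and $T(g)T(h)=D(g)T(g^{-1}h)+T(gh)$, so one needs $D(g)T(c)$ with $c\in[G,G]$ to reduce). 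Neither statement is a formal consequence of Proposition~\ref{Span}(6) alone; both require combining Lemma~\ref{TraceEq}(2) with the fact that $D$ factors through $G^{\mathrm{ab}}$ and with items (1)--(3) of Proposition~\ref{Span}. Until that coset-invariance is established, your upper bound strictly exceeds $n_G$ for every $m\ge 8$ and the proof is incomplete.
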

\begin{proof}
Similar to the abelian group case, from Prop.\ref{Rank} we could deduce the rank of torsion-free part of $\rtr$ is no less than the number of all the semi-simple two-dimensional representations. Over a splitting field of characteristic $0$, the dihedral group $D_{2m}$ would have $4$ one-dimensional irreducible representations and $\frac{1}{2}(m-2)$ two-dimensional irreducible representations. Therefore,$$\text{the\ }\Z_2 \text{-rank of torsion-free part of\ }\rtr\ge4+\frac{4\times3}{2}+\frac{m-2}{2} = \frac{m+18}{2}.$$\\
On the other hand, $D_{2m}$ have $\frac{1}{2}(m+6)$ conjugacy classes, and $3$ of them are not in $[G,G]$. Put aside the identity element for a while. We have 
\begin{enumerate}
    \item at most $\frac{1}{2}(m+4)$ different $T_i$ which are not $2$,
    \item at most $3$ different $D_i$ which are not $1$,
    \item at most $\frac{1}{2}(3\times2)$ different $T_iT_j$ which are not in the Span$\{T_i\}$.
\end{enumerate}(2) and (3) comes from Prop.\ref{Span}(6). Add the element $1$ back, we have
$$d(\rps)\le 1+\frac{m+4}{2}+3+\frac{3\times2}{2} = \frac{m+18}{2}.$$

Similar to the abelian group case, we could deduce that $\rps$ and $\rtr$ are isomorphic free $\Z_2$-modules.
\end{proof}

\section*{Acknowledgement}
I would like to thank my advisor Frank Calegari for introducing me to this area, and for his constant support and helpful discussions on this topic. I would also like to thank Professor Carl Wang-Erickson for taking the time to read an earlier draft of this paper and for his valuable comments and corrections. I also want to thank Chengyang Bao and Abhijit Mudigonda for useful discussions.
\appendix
\section{Codes}

\subsection{$\rps$}

We use Magma to get the pseudodeformation ring over $\F_2$.
~\\
\begin{Verbatim}
>R:=PolynomialRing(GF(2),64);
>G:=SmallGroup(32,49);//the GAP ID of the extraspecial group of order 32
>num_map:=NumberingMap(G);

>I:=ideal<R|R.1,R.33>;
>m:=ideal<R|0>;
>// We let R.i denote T(g_i), and R.(i+32) denote D(g_i)-1,
>// and require T(1)=0, D(1)=1 in characteristic 2

>for x in [1..64]
>   do m:=m+ideal<R|R.x>;
>end for;// generate the maximal ideal in the polynomial ring

>for l,n in [1..32]
>	do for k in [1..32] 
>	    do if (k @@ num_map) eq (l @@ num_map)*(n @@ num_map) then 
>        	I:=I+ideal<R| R.(k+32)-R.(l+32)*R.(n+32) -R.(l+32)-R.(n+32)>;
>		for a in [1..32] 
>   		do if (a @@ num_map) eq (n @@ num_map)*(l @@ num_map) then
>       		I:=I+ideal<R|R.k-R.a>;
>   		end if;
>		end for;
>		for j in [1..32]
>   		do if (j @@ num_map) eq (l @@ num_map)/(n @@ num_map) then
>                       I:=I+ideal<R|R.(n+32)*R.j+R.j-R.l*R.n+R.k>;		
>   		end if;
>    	    end for;
>   	end if;
>	end for;
>end for;
>//Add all the relations for pseudo representations into the ideal I

>// R.2 is the special element tr(a^2) we want to check
> R.2 in m+I;
true
> R.2 in m^2+I;
true
> R.2 in m^3+I;
false
>// R.2 is nonzero in the deformation ring since it is not in m^3+I

 
>Dimension(quo<R|m+I>);
1
>Dimension(quo<R|m^2+I>);
20
>Dimension(quo<R|m^3+I>);
72
>Dimension(quo<R|m^4+I>);
121
>Dimension(quo<R|m^5+I>);
137
>m^5 subset I;
true
>Dimension(quo<R|I>);
137
>//Compute the Hilbert Series
\end{Verbatim}

\subsection{$\rtr$}

 We use Sage instead of Magma to evaluate the special element since Sage runs faster in this computation.

\begin{lstlisting}
sage: R.<x1,x2,x3,x4,y1,y2,y3,y4,z1,z2,z3,z4,w1,w2,w3,w4>=ZZ[]

# as explained in Lemma 3.4, we start from the deformation ring over the integer ring
# x_i are entries of the matrix form of the universal representation of the generator a
# similarly y_i for b, z_i for c, w_i for d

sage: J1=ideal(4*x1^3 + x1^4 + 3*x1^2*
    (2 + x2*x3) + x2*x3*(6 + x2*x3 + 
     4*x4 + x4^2) + 
   2*x1*(2 + x2*x3*(4 + x4)), 
  x2*(2 + x1 + x4)*(2 + 2*x1 + x1^2 + 
    2*x2*x3 + 2*x4 + x4^2),x3*(2 + x1 + x4)*(2 + 2*x1 + x1^2 + 
    2*x2*x3 + 2*x4 + x4^2), 
  x2^2*x3^2 + x4*(4 + 6*x4 + 4*x4^2 + 
     x4^3) + x2*x3*(6 + x1^2 + 8*x4 + 
     3*x4^2 + 2*x1*(2 + x4)),2*y1 + y1^2 + y2*y3, 
  y2*(2 + y1 + y4),y3*(2 + y1 + y4), y2*y3 + 
   y4*(2 + y4),2*w1 + w1^2 + w2*w3, 
  w2*(2 + w1 + w4),w3*(2 + w1 + w4), w2*w3 + 
   w4*(2 + w4))
  
# adding relations a^4=b^2=d^2=1 into the ideal

sage: J2=ideal((1 + x1)^2 + x2*x3 - (1 + z1)^2 - z2*z3, x2*(2 + x1 + x4) - 
   z2*(2 + z1 + z4),x3*(2 + x1 + x4) - 
   z3*(2 + z1 + z4), 
  x2*x3 + (1 + x4)^2 - z2*z3 - 
   (1 + z4)^2,-1 + (1 + x1)*
    ((1 + y1)*(1 + x1 + y1 + x1*y1 + 
       x3*y2) + (x2 + x2*y1 + y2 + 
       x4*y2)*y3) + 
   x3*(x2*(1 + y1)*(1 + y4) + 
     y2*(2 + x1 + x4 + y1 + x1*y1 + 
       x3*y2 + y4 + x4*y4)), 
  (x2 + x2*y1 + y2 + x4*y2)*
   (2 + x1 + x4 + y1 + x1*y1 + 
    x3*y2 + x2*y3 + y4 + x4*y4),(x3 + y3 + x1*y3 + x3*y4)*
   (2 + x1 + x4 + y1 + x1*y1 + 
    x3*y2 + x2*y3 + y4 + x4*y4), 
  -1 + (1 + x4)*
    (y2*(x3 + y3 + x1*y3 + x3*y4) + 
     (1 + y4)*(1 + x4 + x2*y3 + y4 + 
       x4*y4)) + 
   x2*(x3*(1 + y1)*(1 + y4) + 
     y3*(2 + x1 + x4 + y1 + x1*y1 + 
       x2*y3 + y4 + x4*y4)))

# relations c^2=a^2, baba=1

sage: J3=ideal(-(x3*z2) + x2*z3, -(x2*z1) + 
   x1*z2 - x4*z2 + x2*z4,x3*z1 - x1*z3 + x4*z3 - x3*z4, 
  x3*z2 - x2*z3,w3*x2 - w2*x3, w2*x1 - w1*x2 + 
   w4*x2 - w2*x4,-(w3*x1) + w1*x3 - w4*x3 + w3*x4, 
  -(w3*x2) + w2*x3,-(y3*z2) + y2*z3, -(y2*z1) + 
   y1*z2 - y4*z2 + y2*z4,y3*z1 - y1*z3 + y4*z3 - y3*z4, 
  y3*z2 - y2*z3,w3*y2 - w2*y3, w2*y1 - w1*y2 + 
   w4*y2 - w2*y4,-(w3*y1) + w1*y3 - w4*y3 + w3*y4, 
  -(w3*y2) + w2*y3,2*w1 + w1^2 + w2*w3 - 2*x1 - x1^2 - 
   x2*x3 + 2*w1*z1 + w1^2*z1 - 
   2*x1*z1 - x1^2*z1 - x2*x3*z1 + 
   w3*z2 + w1*w3*z2 + w2*z3 + 
   w1*w2*z3 - 2*x2*z3 - x1*x2*z3 - 
   x2*x4*z3 + w2*w3*z4, 
  2*w2 + w1*w2 + w2*w4 - 2*x2 - 
   x1*x2 - x2*x4 + w2*z1 + w1*w2*z1 + 
   w1*z2 + w4*z2 + w1*w4*z2 - 
   2*x1*z2 - x1^2*z2 - x2*x3*z2 + 
   w2^2*z3 + w2*z4 + w2*w4*z4 - 
   2*x2*z4 - x1*x2*z4 - x2*x4*z4,2*w3 + w1*w3 + w3*w4 - 2*x3 - 
   x1*x3 - x3*x4 + w3*z1 + w1*w3*z1 - 
   2*x3*z1 - x1*x3*z1 - x3*x4*z1 + 
   w3^2*z2 + w1*z3 + w4*z3 + 
   w1*w4*z3 - x2*x3*z3 - 2*x4*z3 - 
   x4^2*z3 + w3*z4 + w3*w4*z4, 
  w2*w3 + 2*w4 + w4^2 - x2*x3 - 
   2*x4 - x4^2 + w2*w3*z1 + w3*z2 + 
   w3*w4*z2 - 2*x3*z2 - x1*x3*z2 - 
   x3*x4*z2 + w2*z3 + w2*w4*z3 + 
   2*w4*z4 + w4^2*z4 - x2*x3*z4 - 
   2*x4*z4 - x4^2*z4)
 

# relations ac=ca, ad=da, bc=cb, bd=db, dcd=a^2c
  
sage: I = J1+J2+J3
  
sage: x1^2+x4^2+2*x1+2*x4+2*x2*x3 in I
sage: True
# tr(a^2)-2 is in I

sage:z1^2+z4^2+2*z1+2*z4+2*z2*z3 in I
sage: True
# double check: tr(c^2)-2 is in I

\end{lstlisting}


\begin{thebibliography}{WWE17}
\providecommand{\url}[1]{\texttt{#1}}
\providecommand{\urlprefix}{URL }
\providecommand{\eprint}[2][]{\url{#2}}

\bibitem[BCP97]{13}
Wieb Bosma, John Cannon, and Catherine Playoust, \textit{The Magma algebra system}. I. The user language, J. Symbolic Comput. \textbf{24} (1997), no. 3-4, p. 235–265, Computational algebra and number theory (London, 1993). MR 1484478
\bibitem[BG09]{8}
Joël Bellaïche and Gaëtan Chenevier, \textit{Families of Galois representations and Selmer groups}. Astérisque, no. 324 (2009), 326 p. \url{http://numdam.org/item/AST_2009__324__R1_0/}.
\bibitem[Che14]{1}
Gaëtan Chenevier, \textit{The p-adic analytic space of pseudocharacters of a profinite group and pseudorepresentations over arbitrary rings}, Automorphic forms and Galois representations. Vol. 1, London Math. Soc. Lecture Note Ser., vol. 414, Cambridge Univ. Press, Cambridge, 2014, p. 221-285. MR 3444227
\bibitem[CS19]{11}
Frank Calegari and Joel Specter, \textit{Pseudo-representations of weight one are unramified}. Algebra Number Theory 13 (7) 1583 - 1596, 2019. \url{https://doi.org/10.2140/ant.2019.13.1583}.
\bibitem[Fro]{12}
Ferdinand Georg Frobenius, \textit{Über die Primfactoren der Gruppendeterminante}, Ges. Abh. III (1968) (S'ber. Akad. Wiss. Berlin 1343–1382).
\bibitem[Gro97]{7}
Larry C. Grove, \textit{Groups and Characters}, Wiley(1997).
\bibitem[Maz89]{6}
Barry Mazur, \textit{Deforming Galois representations}, Galois groups over $\mathbf{Q}$ (Berkeley, CA, 1987), Math.
Sci. Res. Inst. Publ., vol. 16, Springer, New York, 1989, p. 385-437. MR 1012172
\bibitem[Ny96]{10}
Louise Nyssen, \textit{Pseudo-représentations}, Math. Ann. 306 (1996), p. 257-283.
\bibitem[Pro74]{15} Claudio Procesi, \textit{Finite dimensional representations of algebras}, Israel Journal of Mathematics 19(1974), p. 169–182.
\bibitem[Pro76]{16}  Claudio Procesi, \textit{Invariant theory of n × n matrices}, Advances in Math 19(1976), p. 306–381.
\bibitem[Pro87]{17} Claudio Procesi, \textit{A formal inverse to the Cayley Hamilton theorem}, Journal of Algebra 107(1987), p. 63–74.
\bibitem[Rou96]{9} 
Raphaël Rouquier, \textit{Caractérisation des caractères et pseudo-caractères}, J. Algebra 180 (1996), p. 571-586.
\bibitem[Sage20]{14}
The Sage Developers, \textit{Sagemath, the Sage Mathematics Software System (Version 9.1)}, 2020, \url{https://www.sagemath.org}.
\bibitem[Ser77]{18}
Jean-Pierre Serre, \textit{Linear representations of finite groups}, Graduate Texts in Mathematics Springer (1977).
\bibitem[Tay91]{3}
Richard Taylor, \textit{Galois representations associated to Siegel modular forms of low weight}, Duke Math. J. 63 (1991), no. 2, p. 281-332. MR 1115109
\bibitem[WE18]{4}
Carl Wang-Erickson, \textit{Algebraic families of Galois representations and potentially semi-stable pseudodeformation rings}, Math. Ann. 371 (2018), no. 3-4, p. 1615-1681. MR 3831282
\bibitem[WWE17]{5}
Preston Wake and Carl Wang-Erickson, \textit{Ordinary pseudorepresentations and modular forms}, Proc. Amer. Math. Soc. Ser. B 4 (2017), p. 53-71. MR 3738092
\bibitem[Wil88]{2}
Andrew Wiles, \textit{On ordinary $\lambda$-adic representations associated to modular forms}, Invent. Math. 94 (1988), no. 3, p. 529-573. MR 969243

\end{thebibliography}
\end{document}